\newcommand{\cal}{\mathcal}
\newcommand\<{\langle}
\renewcommand\>{\rangle}
\def\epsilon{\varepsilon}
\def\phi{\varphi}
\def\hat{\widehat}
\newcommand{\Out}{\mbox{Out}}
\def\strutdepth{\dp\strutbox}
\def \ss{\strut\vadjust{\kern-\strutdepth \sss}}
\def \sss{\vtop to \strutdepth{
\baselineskip\strutdepth\vss\llap{$\diamondsuit\;\;$}\null}}
\def\strutdepth{\dp\strutbox}
\def \sst{\strut\vadjust{\kern-\strutdepth \ssss}}
\def \ssss{\vtop to \strutdepth{
\baselineskip\strutdepth\vss\llap{$\spadesuit\;\;$}\null}}
\def\strutdepth{\dp\strutbox}
\def \ssh{\strut\vadjust{\kern-\strutdepth \sssh}}
\def \sssh{\vtop to \strutdepth{
\baselineskip\strutdepth\vss\llap{$\heartsuit\;\;$}\null}}
\def\qed{\hfill\rlap{$\sqcup$}$\sqcap$\par}
\def\bar{\overline}
\def\strutdepth{\dp\strutbox}
\def \ss{\strut\vadjust{\kern-\strutdepth \sss}}
\def \sss{\vtop to \strutdepth{
\baselineskip\strutdepth\vss\llap{$\diamondsuit\;\;$}\null}}
\def\strutdepth{\dp\strutbox}
\def \sst{\strut\vadjust{\kern-\strutdepth \ssss}}
\def \ssss{\vtop to \strutdepth{
\baselineskip\strutdepth\vss\llap{$\spadesuit\;\;$}\null}}
\def\qed{\hfill\rlap{$\sqcup$}$\sqcap$\par}
\newtheorem{thm}{Theorem}[section]
\newtheorem{cor}[thm]{Corollary}
\newtheorem{lem}[thm]{Lemma}
\newtheorem{prop}[thm]{Proposition}
\theoremstyle{definition}
\newtheorem{defn}[thm]{Definition}
\newtheorem{example}[thm]{Example}
\newtheorem{rem}[thm]{Remark}
\theoremstyle{remark}
\numberwithin{equation}{section}
\begin{document}

\author[K.~Ye]{Kaidi Ye}

\title[Quotient and Blow-up of automorphisms of Graphs of groups]
{Quotient and Blow-up of automorphisms of Graphs of groups}

\begin{abstract} 
In this paper we study the quotient and ``blow-up" of graph-of-groups $\cal{G}$ and of their automorphisms $H: \cal{G} \rightarrow \cal{G}$. We show that the existence of such a blow-up of any $\bar{H}: \bar{\cal{G}} \rightarrow \bar{\cal{G}}$, relative to a given family of ``local" graph-of-groups isomorphisms $H_{i}: \cal{G}_{i} \rightarrow \cal{G}_{i}$ depends crucially on the $H_{i}^{-1}$-conjugacy class of the correction term $\delta(E_i)$ for any edge $E_i$ of $\bar{\cal{G}}$, where $H$-conjugacy is a new but natural
concept introduced here.

As an application we obtain a criterion as to whether a partial Dehn twist can be blown up relative to local Dehn twists, to give an actual Dehn twist.
The results of this paper are also used crucially in the follow-up papers \cite{LY1, KY02, KY03}.
\end{abstract}

\subjclass[1991]{Primary 20F, Secondary 20E}
\keywords{graph-of-groups, Bass-Serre theory, free group, Dehn twists}

\maketitle

\section{Introduction}

${}^{}$
Graphs-of-groups and Bass-Serre theory have played a central role in geometric group theory ever since this field came into existence in the 1980's. As a prime example we would like to mention its prominent role in the understanding of automorphisms of a hyperbolic group $G$, see \cite{Levitt}, 
which is based on an essentially unique graph-of-groups decomposition of $G$, in case that $G$ is freely indecomposable.

If on the other hand the group $G$ is a free group $F_n$ of finite rank $n \geq 2$, then a special kind of graph-of-groups, called {\em very small}, plays an important role in the definition of the boundary of Culler-Vogtmann's Outer space $CV_n$, which is the analogue of Teichm\"uller space, for $\Out(F_n)$ in place of the mapping class group. The work presented here is mainly motivated by questions arising from this perspective, 
although we keep our set-up slightly more general.

\smallskip

Given a graph-of-groups $\cal{G}$ based on a finite connected graph $\Gamma=\Gamma(\cal{G})$, for any connected subgraph $\Gamma_0 \subset \Gamma$ we denote by $\cal{G}_{0}$ the restriction of $\cal{G}$ to $\Gamma_0$. There is a natural way to define a quotient graph-of-groups $\bar{\cal{G}}=\cal{G} \slash \cal{G}_{0}$ which is obtained by ``contracting" $\cal{G}_{0}$ into a single vertex $V_0$ with vertex group $G_{V_0} \cong \pi_{1}(\cal{G}_{0})$, thus giving rise to a canonical isomorphism 
$ \Theta: \pi_{1}(\cal{G}) \rightarrow \pi_{1}(\bar{\cal{G}})$.
By construction the quotient graph $\bar{\cal{G}}$ is {\it compatible} with the local graph-of-groups $\cal{G}_0$, in the sense that for any edge $E$ of $\bar{\cal{G}}$ with terminal vertex $V_0$ the canonical image of the edge group $G_{E}$ in the vertex group $G_{V_0} \cong \pi_{1}(\cal{G}_{0})$ is (up to conjugation) contained in one of the vertex groups of $\cal{G}_{0}$.

This quotient concept extends naturally to an isomorphism $H$ of $\cal{G}$ which acts as identity on the underlying graph 
$\Gamma$: We can 
construct a quotient graph-of-groups isomorphism $\bar{H}: \bar{\cal{G}} \rightarrow \bar{\cal{G}}$ which induces on the fundamental group an outer automorphism $\hat{\bar{H}}$ 
that is conjugate via $\Theta$ to the outer automorphism $\hat{H}$ induced by $H$,
as shown in the diagram below. 

\[
\begin{CD}
\pi_{1}(\cal{G}) @> \hat{H}>> \pi_{1}(\cal{G}) \\
@V\Theta VV @V \Theta VV \\
\pi_{1}(\bar{\cal{G}})@>>\hat{\bar{H}}> \pi_{1}(\bar{\cal{G}})
\end{CD}
\]
The restriction $H_{0}$ of $H$ to $\cal{G}_{0}$ is called the {\it local graph-of-groups isomorphism} at $V_0$, 
and again certain natural {\em compatibility conditions} between the pairs $(\bar H, \bar{\cal{G}})$ and $(H_0, \cal{G}_0)$ 
are 
satisfied, which are stated precisely in 
Definition \ref{defn-6.1} below.

Of course, both, 
the quotient graph-of-groups $\bar{\cal{G}}$ and the quotient isomorphism $\bar{H}$, 
are also well-defined modulo more than one 
pairwise 
disjoint connected sub-graph-of-groups $\cal{G}_{i}$ of $\cal{G}$.

\medskip

The main purpose of this paper is to 
study the converse of 
the above described quotient 
procedure, which 
we call 
the 
``blow-up" 
of a graph-of-groups isomorphism. We prove
(see Theorem \ref{blowup}):

\begin{thm}\label{intro1}
Let $\bar{H}: \bar{\cal{G}} \rightarrow \bar{\cal{G}}$ be a graph-of-groups isomorphism which acts as identity on the graph $\bar{\Gamma}$ underlying $\bar{\cal{G}}$. Assume that for 
some
vertices $V_i$ of $\bar{\Gamma}$ the group isomorphism $\bar{H}_{V_i}$ is induced by a 
local graph-of-groups 
automorphism 
$H_{i}: \cal{G}_{i} \rightarrow \cal{G}_{i}$ 
which also acts as identity on the underlying graph $\Gamma(\cal{G}_i)$.

Then one can blow up $(\bar{H}, \bar{\cal{G}})$ via the family of 
$(H_i, \cal G_i)$
to obtain a blow-up graph-of-groups 
isomorphism
$H: \cal G \to \cal G$, 
with induced outer automorphism $\hat H = \hat{\bar H}$, 
if and only if 
each 
$(H_i, \cal{G}_i)$ is compatible (in the sense of Definition \ref{defn-6.1}) with $(\bar{H}, \bar{\cal{G}})$.
\end{thm}

We are most interested 
in the special case where for any edge $E$ of $\bar{\cal G}$ the edge group $G_E$ 
is trivial. In this case 
the compatibility conditions from Definition \ref{defn-6.1} simplify to a property of the 
edge $E$ which
we call 
``locally zero''. Since this is a new concept, we will try to explain it here briefly:

Recall first that if $E$ terminates in the vertex $V_i$, then (as for any graph-of-groups isomorphism, see Definition \ref{graphofgroupsiso}) the {\em correction term} $\delta(E) \in G_{V_i}$ serves to make the edge and vertex isomorphisms $\bar H_{E}$ and $\bar H_{V}$ commute with the injective edge homomorphism $f_{E}: G_{E} \rightarrow G_{V_i}$.

Now, we say that 
$E$
is {\em locally zero} 
(see Definition \ref{locally-zero})
if the identification $G_{V_i} \cong \pi_1\cal G_i$ maps $\delta(E)$ to an element which is 
``$H_i^{-1}$-conjugate'' 
to an element that has $\cal{G}_{V_i}$-length equal to zero. If the local automorphism $H_i$ is equal to the identity map, then 
{\em $H_i^{-1}$-conjugation} 
will simply be the usual conjugation in $G_{V_i}$; in general though it is a more involved and quite delicate new notion, defined 
below in section~\ref{Hconj}.

\medskip 

In the last section of this paper we will apply Theorem \ref{intro1} to the case of Dehn twist automorphisms of a free group $F_{n}$.  
Classically, a {\it Dehn twist} $D=(\cal{G}, (z_{e})_{e \in E(\cal{G})})$ on a
graph-of-groups $\cal{G}$
is defined by a family of {\it twistors} $ (z_{e})_{e \in E(\cal{G})}$, where each $z_{e}$ is in the center of the edge group $G_{e}$ of $\cal{G}$. 
It turns out (see Proposition \ref{trivial edge groups}
and Remark \ref{example-Brian}) that for free groups an 
alternative, 
slightly more restrictive 
definition 
of a Dehn twist is given by graph-of-groups isomorphisms $H: \cal{G} \rightarrow \cal{G}$ where all edge groups of $\cal{G}$ are trivial and $H$ acts as identity on the underlying graph and on every vertex group of $\cal{G}$.

Inspired by this alternative definition, we define 
(see Definition~\ref{partial Dehn twist})
a {\it partial Dehn twist} 
$D: \cal{G} \rightarrow \cal{G}$ with $\pi_{1}(\cal{G}) \cong F_{n}$, {\em relative to some  family of vertices $V_{1}, \ldots, V_{m}$} of the underlying graph $\Gamma(\cal G)$, which differs from the above classical notion in that on these ``exceptional vertices'' $V_i$ the 
local automorphism $D_{V_i}: G_{V_i} \to G_{V_i}$ 
induced by $D$ may be non-trivial.

Of particular interest is the case where these non-trivial local automorphisms are all 
Dehn twist automorphisms 
themselves. This occurs naturally if one quotients a given Dehn twists modulo a family of pairwise disjoint subgraphs. The converse direction, however, is far less obvious, and the desired blow-up Dehn twist doesn't always exist. We prove here
(see Corollary~\ref{Dehn01}):

\begin{cor}
\label{intro2}
(1)
Let $\bar{D}: \bar{\cal{G}} \rightarrow \bar{\cal{G}}$ be a partial Dehn twist, and assume that for some family of vertices $V_i$ of $\bar{\cal{G}}$ the vertex group 
automorphisms 
$D_{V_i}$ are induced by 
Dehn twists 
$D_{V_i}: \cal{G}_{V_i} \rightarrow \cal{G}_{V_i}$.

Then $\bar D$ can be blown up via the given family of local Dehn twists $D_{V_i}$
to give 
a
graph-of-groups isomorphism
$D: \cal{G} \rightarrow \cal{G}$ 
if and only if 
every edge $E_i$ of $\bar{\cal{G}}$ 
with terminal endpoint in one of the $V_i$ 
is locally zero. 

\smallskip
\noindent
(2)
The  blow-up 
automorphism
$D: \cal{G} \rightarrow \cal{G}$
obtained in (1) is 
a Dehn twist, and hence $\hat D = \hat{\bar D}$ is 
a Dehn twist automorphism.
\end{cor}

It turns out that the last conclusion of the above corollary is more subtle than it may appear at first sight. In order to explain this, we first consider the following two examples:

\begin{example} 
\label{no-blow-up}
(1) We consider a graph-of-groups isomorphism $\bar{H}: \bar{\cal{G}} \rightarrow \bar{\cal{G}}$ defined as the follows:

\begin{enumerate}
\item[(a)] The graph $\Gamma(\bar{\cal{G}})$ underlying $\bar{\cal{G}}$ consists of a single edge $E$  and two distinct vertices $V = \tau(\bar{E}) \neq V_1 = \tau(E)$. The graph-of-groups $\bar{\cal{G}}$ has 
trivial edge group $G_{E}$, hence trivial edge homomorphisms,
and vertex groups $G_{V} = \< a, b \>$,
$G_{V_1} = \< c \>$.

\item[(b)] The isomorphism $\bar{H}$ acts as the identiy on $\Gamma(\bar{\cal{G}})$ and induces trivial group automorphisms on $G_E$ and on $G_{V_1}$, while the local group automorphism $\bar{H}_{V} : G_{V} \rightarrow G_{V}$ is a Dehn twist automorphism which acts on the generators by the map $a\mapsto a$ and $b \mapsto ba$. 
The correction terms are $\delta(E)=1_{G_{V_1}}$ and $\delta(\bar{E})=aba^{-1}b^{-1}$.
In particular, $\bar{H}$ is a partial Dehn twist relative to the vertex $V$.
\end{enumerate}

\noindent
(2) We now consider a local graph-of-groups isomorphism $H_{V} : \cal{G}_{V} \rightarrow \cal{G}_{V}$ which induces the same Dehn twist automorphism as $\bar{H}_V$. The isomorphism 
$H_V$ 
is defined by the following data:

\begin{enumerate}
\item[(a)] The graph-of-groups $\cal{G}_V$ consists of a single vertex $v$  with $G_v = \< x \>$ and a loop edge $e$ with trivial edge group. The isomorphism $H_V$ is a Dehn twist, in that it acts trivially on the underlying graph $\Gamma(\cal{G}_V)$, the edge group $G_{e}$ and vertex group $G_v$. We choose the correction terms to be $\delta(e)=x$ and $\delta(\bar{e})= 1_{G_v}$. 

\item[(b)] Then $H_{V}$ induces on its fundamental group $\pi_1(\cal{G}_{V}) \cong \< x, t_{e}\>$ an automorphism which sends $x \mapsto x$ and $t_e \mapsto t_e x$. This is exactly the same automorphism as $\bar{H}_{V}$, modulo the identification map $\theta$ given by $a \mapsto x$ and $b \mapsto t_e$.
\end{enumerate}

\noindent
However, in this example $\bar{H}$ cannot be blown up via $H_V$, since the correction term $\delta(\bar{E})$ is mapped by $\theta$ to $xt_{e} x^{-1} t_{e}^{-1}$ which is not $H_{V}^{-1}$-conjugate to any element that has $G_{V}$-length equal to zero: the edge $\bar{E}$ is not locally-zero. 

\end{example}

\begin{example} 
\label{blow-up-possible}
Let $(\bar{H}, \bar{\cal{G}})$ be the partial Dehn twist defined in Example \ref{no-blow-up}.  Instead of $(H_V, \cal{G}_V)$ we now consider a local Dehn twist $(H'_{V}, \cal{G}^{\prime}_{V})$ where $\cal{G}^{\prime}_V$ consists of a single vertex $v'$  with $G_v' = \< x, y \>$, a loop edge $e'$ with cyclic edge group $G_{e'} = \< z \>$, and edge homomorphisms that maps $z$ to $f_{\bar{e}'}(z) = y$ and $f_{e'}(z) = x$.
The correction terms are $\delta(e')=x$ and $\delta(\bar{e}')= 1_{G_{v'}}$. 

Then the fundamental group of $\cal{G}^{\prime}_{V}$ is $\pi_1(\cal{G}_{V}^{\prime}) \cong \< x, y, t_{e'}\ | y= t_{e'} x t_{e'}^{-1} \> \cong \< x, t_{e'} \>$, and $H'_{V}$ induces $\bar{H}_{V}$ via the identification $\theta^{\prime}: a \mapsto x; b \mapsto t_{e'}$.

Contrary to the previous example, one can indeed blow up $(\bar{H}, \bar{\cal{G}})$ via $(H'_{V}, \cal{G}^{\prime}_{V})$ since the identification $\theta^{\prime}$ maps $\delta(\bar{E})$ to $x t_{e'} x^{-1} t_{e'}^{-1} = x y^{-1} \in G_{v'}$: in this example the edge $\bar{E}$ is locally-zero.

\end{example}

It is easy to see that both examples represent the same outer automorphisms of $F_3$.
This shows that the last conclusion of Corollary \ref{intro2}, namely that the given partial Dehn twist $\bar D$ induces on $\pi_1 \bar{\cal G}$ a Dehn twist automorphism $\hat{\bar D}$, is 
not equivalent to the fact 
that the blow-up automorphism exists and is a Dehn twist. However, 
using the terminology of \cite{CL99}, 
it is shown in \cite{KY02} that,
when all the local Dehn twists 
$D_{V_i}$ 
are 
``efficient'' 
(as is the case in Example \ref{blow-up-possible}), then the condition that all 
edges $E_i$
are locally zero is not just sufficient but also necessary for the 
last 
conclusion of Corollary \ref{intro2}. This 
is
used in \cite{KY03} as 
an important 
ingredient of the proof that every linearly growing outer automorphism of a finitely generated free group $F_{n}$ is (up to taking powers) a Dehn twist automorphism.

\smallskip

Since this paper has been made public first, its results have already been used crucially in two applications:
\begin{enumerate}
\item
Corollary \ref{intro2} is a vital ingredient in our algorithmic solution in \cite{KY03} to the question which polynomially growing automorphisms of $F_{n}$ are 
(up to passing to a power)
induced by a surface homeomorphism.
\item
In \cite{LY1} a normal form (based on graph-of-groups and Dehn twists) for quadratically growing automorphisms is given, together with 
a method how to derive 
this normal form. One of the crucial steps in this procedure is to quotient subgraphs and to blow up vertices relative to local Dehn twists, as studied here.
\end{enumerate}

\subsection*{\it Acknowledgements}

${}^{}$

I would like to thank my thesis advisors, Arnoud Hilion and Martin Lustig, for all their guidance and encouragement on this paper, and for all the helpful discussions and suggestions.

\section{Basics of Graphs of groups and their Isomorphisms}

In this section 
we recall some basic knowledge about graph-of-groups as well as their isomorphisms.
Most of our notations are taken from \cite{CL99}; we refer the readers to \cite{Serre80}, \cite{RW15} and \cite{Bass93} for more detailed 
information 
and discussions.

\subsection{Basic Conventions}

${}^{}$

Unless otherwise stated, a {\it graph} refers to a finite, non-empty, connected graph in the sense of Serre (cf. \cite{Serre80}). 

We recall the notations here. For a graph $\Gamma$, we denote by $V(\Gamma)$, $E(\Gamma)$ its {\it vertex set} and {\it edge set} respectively. 
An edge $e \in E(\Gamma)$ is oriented, and we denote by $\bar{e}$ the edge with inverse orientation,
$\tau(e)$ its {\it terminal vertex} and 
$\tau(\bar{e})=\iota(e)$ 
its {\it initial vertex}.

Notice in particular 
that our graph $\Gamma$ is non-oriented. An {\it orientation} of $\Gamma$ refers to a subset $E^{+}(\Gamma) \subset E(\Gamma)$ such that $E^{+}(\Gamma) 
\cup
\bar{E}^{+}(\Gamma)=E(\Gamma)$ and $E^{+}(\Gamma) \cap \bar{E}^{+}(\Gamma)=\emptyset$, where $\bar{E}^{+}(\Gamma)=\{\bar{e} \mid e \in E^{+}(\Gamma)\}$.

For an arbitrary group $G$, we denote by $ad_{x}: G \rightarrow G$ the inner automorphism defined by element  $x \in G$, namely $ad_{x}(g)=xgx^{-1}$ for all $g \in G$.

\subsection{Graphs of Groups}

\begin{defn}\label{graph of groups}
A {\it graph-of-groups} $\cal{G}$ is defined by
$$ \cal{G}=(\Gamma, (G_{v})_{v\in V(\Gamma)}, (G_{e})_{e \in E(\Gamma)}, (f_{e})_{e\in E(\Gamma)})$$
where:
\begin{enumerate}
\item $\Gamma$ is a graph, called the {\it underlying} graph;
\item each $G_{v}$ is a group, called the {\it vertex group} of $v$;
\item each $G_{e}$ is a group, called the {\it edge group} of $e$, and we require $G_{e}=G_{\bar{e}}$ for every $e \in E(\Gamma)$;
\item for each $e\in E(\Gamma)$,
the 
map 
$f_{e}: G_{e} \rightarrow G_{\tau(e)}$ is an injective {\it edge homomorphism}.
\end{enumerate}
\end{defn}

For a graph-of-groups $\cal{G}$, we usually denote by $\Gamma(\cal{G})$ the graph underlying it. The vertex set of $\Gamma(\cal{G})$ is denoted by $V(\cal{G})$ while the edge set is denoted by $E(\cal{G})$.

\begin{defn} \label{bassgroup}
For a graph-of-groups $\cal{G}$, its {\it word group} $W(\cal{G})$ 
is the free product of 
all
vertex groups and 
of
the free group generated by 
the {\it stable letter} $t_{e}$ for every $e \in E(\Gamma)$, i.e. 
$$W(\cal{G})=\underset{v \in V(\Gamma)}{\operatorname{\ast}}G_{v}*F(\{t_{e} \mid e\in E(\Gamma)\}) \, . $$

The {\it path group} (sometimes also called {\it Bass group}) of $\cal{G}$ is defined by 
$$\Pi(\cal{G})= W(\cal{G})/ R \, ,$$
where $R$ is the normal subgroup determined by the following relations:
\begin{enumerate}
\item[$\diamond$] $t_{e}=t_{\bar{e}}^{-1}$, for every $e \in E(\Gamma)$;
\item[$\diamond$] 
$f_{\bar{e}}(g)=t_{e}f_{e}(g)t_{e}^{-1}$,
for every $e \in E(\Gamma)$ and every $g\in G_{e}$.
\end{enumerate}
\end{defn}

\begin{rem}
A {\it word} $w \in W(\cal{G})$ can always be written in the form 
$$w = r_{0}t_{1}r_{1}...r_{q-1}t_{q}r_{q}\ \ \ \  (q \geq 0) \, , $$
where each 
$t_{i} \in \{t_{e} \mid e \in E(\Gamma)\}$
and each 
$r_{i} \in\underset{v \in V(\Gamma)}{\operatorname{\ast}}G_{v}$.

The sequence $(t_{1}, t_{2}, ..., t_{q})$ is called the {\it path type} of $w$, the number $q$ is called the {\it path length},
or sometimes the {\it $\cal{G}$-length} 
of $w$,
denoted by $| w |_{\cal{G}}=q$. In this case, we say that $e_{1}e_{2}...e_{q}$ is the path underlying $w$. 
Two path types $(t_{1}, t_{2}, ..., t_{q})$ and $(t^{\prime}_{1}, t^{\prime}_{2}, ..., t^{\prime}_{s} )$ are said to be same if and only if $q=s$ and $t_{i}=t^{\prime}_{i}$ for each $1 \leq i \leq q$.
\end{rem}

\begin{defn}
Let $w \in W(\cal{G})$ be a word of the form $w = r_{0}t_{1}r_{1}...r_{q-1}t_{q}r_{q}$. The word $w$ is said to be {\it connected} if 
$r_{0} \in G_{\tau(\bar{e}_{1})}$,
$r_{q} \in G_{\tau(e_{q})}$, and 
$\tau(e_{i})=\tau(\bar{e}_{i+1})$
with
$r_{i} \in G_{\tau(e_{i})}$, for $i=1,2,...,q-1$.
We sometimes call such $w$ {\it a connecting word from $\tau(\bar{e}_1)$ to $\tau(e_q)$}, or {\it a word from $\tau(\bar{e}_1)$ to $\tau(e_q)$}.

Moreover, if $w$ is connected and $\tau(e_{q})=\tau(\bar{e}_{1})$, we say that $w$ is a {\it closed connected word issued at the vertex $\tau(e_{q})$}.
\end{defn}

\begin{defn}
Let $w=r_{0}t_{1}r_{1}...r_{q-1}t_{q}r_{q} \in W(\cal{G})$, $w$ is said to be {\it reduced} if it satisfies:
\begin{enumerate}
\item[$\diamond$] if $q=0$, then $w = r_{0}$  
isn't 
equal to the unit element;
\item[$\diamond$] if $q>0$, then whenever $t_{i} = t_{i+1}^{-1}$ for some $1 \leq i \leq q-1$ we have $r_{i}\not\in f_{e_{i}}(G_{e_{i}})$.
\end{enumerate}

Moreover the word $w$ is said to be {\it cyclically reduced} if it is reduced and
if $q>0$ and $t_{1} = t_{q}^{-1}$, then $r_{q}r_{0}\not\in f_{e_{q}}(G_{e_{q}})$.
\end{defn}

We recall the following 
well known 
facts.
\begin{prop}\label{propbassgroup}
For any graph-of-groups $\cal{G}$, the following holds:
\begin{enumerate}
\item Every non-trivial 
element of $\Pi(\cal{G})$ can be represented as a reduced word.
\item Every reduced word is a non-trivial element in $\Pi(\cal{G})$.
\item If $w_{1},w_{2} \in W(\cal{G})$ are two reduced words representing the same element in $\Pi(\cal{G})$, then $w_{1}$ and $w_{2}$ are of the same path type. In particular, $w_{2}$ is connected if and only if $w_{1}$ is connected.

In fact, suppose $w_{1}=r_{0}t_{1}r_{1}...r_{q-1}t_{q}r_{q}$ and $w_{2}=r^{\prime}_{0}t_{1}r^{\prime}_{1}...r^{\prime}_{q-1}t_{q}r^{\prime}_{q}$, then there exist elements $h_{i} \in G_{e_{i}}$ $(i=1,2,...,q)$ such that:

$r^{\prime}_{0}=r_{0}f_{\bar{e}_{1}}(h_{1})$;
$r^{\prime}_{i}=f_{e_{i}}(h_{i})r_{i}f_{\bar{e}_{i+1}}(h^{-1}_{i+1})$ for $(i=1,2,...,q-1)$;
and $r^{\prime}_{q}=f_{e_{q}}(h_{q})r_{q}$.
\end{enumerate}
\qed
\end{prop}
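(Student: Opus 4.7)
The plan is to establish this as the standard normal form theorem for the path group, following the approach of Bass--Serre and Higgins; I treat the three parts in turn and then indicate the main difficulty.

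For part (1), the strategy is a straightforward length-reduction argument. Any element of $\Pi(\cal G)$ is represented by some word $w = r_0 t_1 r_1 \cdots t_q r_q \in W(\cal G)$, which after inserting unit elements where needed and using $t_{\bar e} = t_e^{-1}$ may be taken to be connected. Apply reductions greedily: if $w$ is not reduced, then either $q = 0$ and $r_0 = 1$ (so the represented element is trivial, contrary to the assumption of non-triviality), or there is an index $i$ with $t_i = t_{i+1}^{-1}$ and $r_i = f_{e_i}(g)$ for some $g \in G_{e_i}$. In the latter case the defining relation of $\Pi(\cal G)$, rewritten as $t_e f_e(g) t_e^{-1} = f_{\bar e}(g)$, permits us to replace the subword $t_i r_i t_{i+1}$ by $f_{\bar e_i}(g) \in G_{\tau(\bar e_i)}$, strictly decreasing the path length. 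Since the path length is a non-negative integer, iterating terminates in a reduced word representing the same non-trivial element.

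For parts (2) and (3), the plan is to reduce to classical theorems by peeling off one edge at a time. Fix a spanning tree $T \subset \Gamma(\cal G)$ and express $\Pi(\cal G)$ as an iterated amalgamated free product along the edges of $T$, followed by an iterated HNN extension over the remaining edges of $\Gamma \smsm T$. Part (2) then follows from the amalgamated product normal form theorem together with Britton's Lemma: at each step, a reduced syllable sequence lifts to a non-trivial element, and composing the steps shows that any reduced word in $W(\cal G)$ is non-trivial in $\Pi(\cal G)$. The same two normal form results guarantee at once that the path type $(t_1,\dots,t_q)$ is an invariant of the represented element, which is the first assertion of (3).

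For the refined uniqueness in (3), I would induct on the common path length $q$. The case $q = 0$ is immediate, as both words then lie in a single vertex group. For $q \geq 1$, the equality $w_1 = w_2$ together with Britton's Lemma applied to the stable letter $t_1$ forces $r_0^{-1} r'_0 \in f_{\bar e_1}(G_{e_1})$; writing $r_0^{-1} r'_0 = f_{\bar e_1}(h_1)$ produces $h_1 \in G_{e_1}$ with $r'_0 = r_0 f_{\bar e_1}(h_1)$. The relation $t_{e_1} f_{e_1}(h_1) = f_{\bar e_1}(h_1) t_{e_1}$ then rewrites the prefix $r'_0 t_1$ of $w_2$ as $r_0 t_1 f_{e_1}(h_1)$, so that $f_{e_1}(h_1) r'_1$ becomes the new first vertex-group factor after $t_1$; cancelling the common prefix $r_0 t_1$ reduces the equation to a pair of reduced words of length $q-1$, to which the induction hypothesis supplies $h_2,\dots,h_q$ satisfying the stated identities.

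The main obstacle will be guaranteeing at each inductive step that the extracted $h_i$ lies in the edge group $G_{e_i}$ rather than merely in the adjacent vertex group, and that the rewriting used on $w_2$ preserves reducedness of its tail so that the induction hypothesis actually applies. Both points are precisely what the HNN and amalgamated product normal form lemmas encode, because all permissible rewriting moves come from the defining relations $f_{\bar e}(g) = t_e f_e(g) t_{\bar e}^{-1}$, which only involve edge-group elements; accepting those lemmas from Bass--Serre theory makes the rest of the argument essentially combinatorial bookkeeping.
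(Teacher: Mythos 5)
The paper itself offers no proof of this proposition --- it is stated as a recollection of the standard normal form theorem from Bass--Serre theory (Serre, Bass) --- so your sketch has to stand on its own. Part (1) is fine, and your induction for the refined statement in (3) is the right argument (the sign bookkeeping differs from the paper's displayed formulas, but that is only a relabelling $h_i \mapsto h_i^{-1}$; in fact the paper's own formulas for $r'_0$ and $r'_i$ are not mutually consistent as printed, so no harm there). The one genuine flaw is your structural description of $\Pi(\cal{G})$ in part (2): expressing the group as ``iterated amalgamated free products along the edges of a spanning tree $T$, followed by HNN extensions over the edges of $\Gamma \smallsetminus T$'' is the presentation of the \emph{fundamental group} $\pi_1(\cal{G},v_0) \cong \pi_1(\cal{G},T)$, in which the stable letters of tree edges have been killed. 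It is not a presentation of the \emph{path group} $\Pi(\cal{G})$: in Definition~\ref{bassgroup} every edge, tree edge or not, retains its stable letter $t_e$ subject only to $f_{\bar e}(g)=t_e f_e(g) t_e^{-1}$, so a single edge joining $u$ to $v$ yields the HNN extension of $G_u * G_v$ associating $f_e(G_e)$ to $f_{\bar e}(G_e)$, not the amalgam $G_u *_{G_e} G_v$. Since the proposition concerns arbitrary reduced words of $W(\cal{G})$ --- most of which are not connected, let alone closed at a basepoint --- your reduction as stated only covers the elements lying in one fundamental group and leaves the general case unproved.

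The repair is short and keeps your toolkit: $\Pi(\cal{G})$ is an iterated (multiple) HNN extension of the free product $*_{v} G_v$ with one stable letter per geometric edge $\{e,\bar e\}$, associated subgroups $f_e(G_e) \leq G_{\tau(e)}$ and $f_{\bar e}(G_e) \leq G_{\tau(\bar e)}$; no spanning tree and no amalgamated products are needed. The reducedness condition of the paper ($r_i \notin f_{e_i}(G_{e_i})$ whenever $t_i = t_{i+1}^{-1}$) is then verbatim Britton's condition, so the multiple-HNN form of Britton's Lemma gives part (2) and the invariance of the path type in (3) in one stroke, and your induction for the refined identities in (3) goes through unchanged (note that after cancelling the common prefix $r_0 t_1$, the leading vertex-group factor of the shortened word is never subject to a pinch condition, so reducedness of the tails is automatic).
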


\begin{defn}
For any 
$v_{0} \in V(\Gamma)$ the {\it fundamental group based at $v_{0}$}, denoted by $\pi_{1}(\cal{G}, v_{0})$, consists of the elements in $\Pi(\cal{G})$ that are 
represented by
closed connected words issued at $v_{0}$.

For a vertex $w_{0} \in V(\Gamma)$ different from $v_{0}$, we have $\pi_{1}(\cal{G},v_{0}) \cong \pi_{1}(\cal{G},w_{0})$. In fact, let $W \in \Pi(\cal{G})$ be 
represented by a 
a connected word with underlying path 
from $v_{0}$ to $w_{0}$. The restriction of 
$ad_{W} : \Pi(\cal{G}) \rightarrow \Pi(\cal{G})$ to $\pi_{1}(\cal{G},w_{0})$ induces an isomorphism from $\pi_{1}(\cal{G},w_{0})$ to $\pi_{1}(\cal{G},v_{0})$. Sometimes we write $\pi_{1}(\cal{G})$ when the choice of basepoint doesn't make a difference.
\end{defn}

\subsection{Graph-of-Groups Isomorphism}

${}^{}$

\begin{defn}\label{graphofgroupsiso}
Let $\cal{G}_{1}$, $\cal{G}_{2}$ be two graphs of groups. Denote 
$\Gamma_{1}=\Gamma(\cal{G}_{1})$ and $\Gamma_{2}=\Gamma(\cal{G}_{2})$.
An isomorphism $H: \cal{G}_{1} \rightarrow \cal{G}_{2}$ is a tuple of the form
$$H=(H_{\Gamma}, (H_{v})_{v \in V(\Gamma_{1})}, (H_{e})_{e \in E(\Gamma_{1})}, (\delta(e))_{e \in E(\Gamma_{1})}) \, ,$$
where
\begin{enumerate}
\item $H_{\Gamma}: \Gamma_{1} \rightarrow \Gamma_{2}$ is a graph isomorphism;
\item $H_{v}: G_{v} \rightarrow G_{H_{\Gamma}(v)}$ is a group isomorphism, for any $v \in V(\Gamma_{1})$;
\item $H_{e}=H_{\bar{e}}: G_{e} \rightarrow G_{H_{\Gamma}(e)}$ is a group isomorphism, for any $e \in E(\Gamma_{1})$;
\item for every $e \in E(\Gamma_{1})$, the {\it correction term} 
$\delta(e) \in G_{\tau(H_{\Gamma}(e))}$
is an element such that
    $$H_{\tau(e)}f_{e}=ad_{\delta(e)}f_{H_{\Gamma}(e)}H_{e}.$$
\end{enumerate}
\end{defn}

\begin{rem}
A graph-of-groups isomorphism $H: \cal{G}_{1} \rightarrow \cal{G}_{2}$ induces an isomorphism $H_{*}: \Pi(\cal{G}_{1}) \rightarrow \Pi(\cal{G}_{2})$ defined on the generators by:
\begin{enumerate}
\item[] $H_{*}(g) = H_{v}(g)$, for $g \in G_{v}$, $v \in V(\Gamma_{1})$;
\item[] $H_{*}(t_{e}) = \delta(\bar{e}) t_{H_{\Gamma}(e)} \delta(e)^{-1}$, for $e \in E(\Gamma_{1})$.
\end{enumerate}

It is 
easy to verify 
that $H_{*}$ preserves the relations $t_{e}t_{\bar{e}}=1$ for any $e \in E(\cal{G})$ and $f_{\bar{e}}(g)=t_{e}f_{e}(g)t_{e}^{-1}$, for any $e\in E(\cal{G})$ and $g \in G_{e}$.

Furthermore, the restriction of $H_{*}$ 
to
$\pi_{1}(\cal{G}_{1},v)$, where $v \in V(\Gamma_{1})$, 
defines an isomorphism
$H_{* v}: \pi_{1}(\cal{G}_{1},v) \rightarrow \pi_{1}(\cal{G}_{2}, H_{\Gamma}(v))$.

As in \cite{CL99}, we define the {\it outer isomorphism} induced by a group isomorphism $f: G_{1} \rightarrow G_{2}$ as the equivalence class 
$$\hat{f}=\{ad_{g}f : G_{1} \rightarrow G_{2} \mid g \in G_{2}\}.$$
Hence $H_{*v}$ induces an outer isomorphism 
$\hat{H}_{*v}: \pi_{1}(\cal{G}_{1},v) \rightarrow \pi_{1}(\cal{G}_2,H_{\Gamma}(v))$.

Observe that when choosing a different vertex $v_{1}$ as basepoint, we may choose a word $W \in \Pi(\cal{G}_{1})$ 
with underlying 
path from $v_{1}$ to $v$ 
to obtain the following commutative diagram:
\[
\begin{CD}
\pi_{1}(\cal{G}_{1},v) @>H_{*v}>> \pi_{1}(\cal{G}_{2}, H_{\Gamma}(v)) \\
@Vad_{W} VV @Vad_{H_{*}(W)} VV \\
\pi_{1}(\cal{G}_{1},v_{1})@>>H_{*v_{1}}> \pi_{1}(\cal{G}_{2}, H_{\Gamma}(v_{1}))
\end{CD}
\]

By Lemma 2.2 and Lemma 3.10 in \cite{CL99}, 
the map 
$\hat{H}_{*v}$ determines an outer isomorphism $\hat{H}_{*v_{1}}: \pi_{1}(\cal{G}_{1}, v_{1}) \rightarrow \pi_{1}(\cal{G}_{2}, H_{\Gamma}(v_{1}))$
which is independent of the choice of $W$.
Hence the isomorphism $H: \cal{G}_{1} \rightarrow \cal{G}_{2}$ induces 
a
well-defined
outer isomorphism $\hat{H}: \pi_{1}(\cal{G}_{1}) \rightarrow \pi_{1}(\cal{G}_{2})$ which doesn't depend on the choice of basepoint.
\end{rem}

\begin{rem}[Composition, Inverse]
For two graph-of-groups isomorphisms $H^{\prime}: \cal{G}_{1} \rightarrow \cal{G}_{2}$, $H^{\prime\prime}: \cal{G}_{2} \rightarrow \cal{G}_{3}$, the {\it composition} of $H^{\prime}$ and $H^{\prime\prime}$ is an isomorphism $H^{\prime\prime}H^{\prime}=H: \cal{G}_{1} \rightarrow \cal{G}_{3}$ 
given (for any $v \in V(\Gamma_{1})$, $e \in E(\Gamma_{1})$) precisely by:
$H_{\Gamma}=H_{\Gamma}^{\prime\prime}H_{\Gamma}^{\prime}$;
$H_{v}= H_{H^{\prime}_{\Gamma}(v)}^{\prime\prime}H^{\prime}_{v}$;
$H_{e}= H_{H^{\prime}_{\Gamma}(e)}^{\prime\prime}H^{\prime}_{e}$;
$\delta(e)=H_{\tau(H^{\prime}_{\Gamma}(e))}^{\prime\prime}(\delta^{\prime}(e))\delta^{\prime\prime}(H^{\prime}_{\Gamma}(e))$.
Moreover, $H$ satisfies 
$H_{*}=H^{\prime\prime}_{*}H^{\prime}_{*}$ and $\hat{H}=\hat{H}^{\prime\prime}\hat{H}^{\prime}$.

For 
any 
graph-of-groups isomorphism $H: \cal{G}_{1} \rightarrow \cal{G}_{2}$ 
the 
{\it inverse} isomorphism is $H^{-1}: \cal{G}_{2} \rightarrow \cal{G}_{1}$, which satisfies $H_{*}^{-1}=(H^{-1})_{*}$ and $\hat{H}^{-1}=\hat{H^{-1}}$, is defined 
(for all $v \in V(\Gamma_{2})$, $e \in E(\Gamma_{2})$)
by:
$ (H^{-1})_{\Gamma}=(H_{\Gamma})^{-1}$;
$ (H^{-1})_{v}=(H_{H_{\Gamma}^{-1}(v)})^{-1}$;
$ (H^{-1})_{e}=(H_{H_{\Gamma}^{-1}(e)})^{-1}$;
$ \delta^{-1}(e)= H^{-1}_{H^{-1}(\tau(e))}(\delta(H^{-1}_{\Gamma}(e))^{-1})$.

\end{rem}

\subsection{A Natural Equivalence Between Graphs Of Groups}\label{natural equivalence}

${}^{}$

Suppose 
that 
$\cal{G}$, $\cal{G}^{\prime}$ are two graphs-of-groups, 
and that
$\cal{G}^{\prime}$ equals to $\cal{G}$ everywhere except that for 
some 
edge $e_{0} \in E(\cal{G})$
one has 
$f^{\prime}_{e_{0}}=ad_{w_{e_{0}}^{-1}} \circ f_{e_{0}}$, where $w_{e_{0}}$ is an element 
in $G_{\tau(e_{0})}$. 
Then there is a natural isomorphism between $\cal{G}$ and $\cal{G}^{\prime}$.

More concretely, define $H_{0}: \cal{G} \rightarrow \cal{G}^{\prime}$ by the rules:

\begin{enumerate}
\item $H_{0, \Gamma}=id_{\Gamma(\cal{G})}$; 
\item $H_{0,v}=id_{G_{v}}$ for 
any 
$v \in V(\cal{G})$; $H_{0,e}=id_{G_{e}}$ for 
any 
$e \in E(\cal{G})$;
\item 
$\delta_{0}(e_0)=w_{e_{0}}$, and 
$\delta_{0}(e)= 1$ 
when $e \neq e_0$.
\end{enumerate}

Then it's  easy to verify that $H_{0}$ is a 
well-defined
graph-of-groups isomorphism, since the additional compatibility requirement holds automatically for all edges $e_{0} \neq e \in \Gamma_{0}$, and for $e_{0}$ we have
\begin{align*}
H_{0,\tau(e_0)} \circ f_{e_0} 
= f_{e_0} = ad_{w_{e_0}} \circ f^{\prime}_{e_0} 
= ad_{\delta_{0}(e_0)}\circ f^{\prime}_{e_0}
= ad_{\delta_{0}(e_0)}\circ f^{\prime}_{e_0} \circ H_{0,e_0 \, }.
\end{align*}

The above isomorphism gives rise to a natural notion of ``equivalent'' graphs-of-groups, where the equivalence relation is generated by isomorphisms of the above type $H_0$ as elementary equivalence.  This notion of ``equivalent'' graph-of-groups, although not really established in the literature, is natural, in that it preserves (up to canonical isomorphisms) the fundamental group. It also shows up in the prime feature of graph-of-groups, meaning Bass-Serre theory:  Given a group $G$ that acts on a (simplicial) tree $T$, for the associated graph-of-groups decomposition $G \cong \pi_1 (\cal G_T)$ the ``quotient'' graph-of-groups $\cal G_T$ of $T$ modulo $G$ is only well defined up to precisely this equivalence relation.

\begin{lem}\label{basiclemma}
Let $\cal{G}$, $\cal{G}^{\prime}$, $H_0$,
$e_0$ and $w_0$ 
be 
as 
defined as above.

Let $H: \cal{G} \rightarrow \cal{G}$ be 
a graph-of-groups 
automorphism,  
and let 
$H'=(H'_{\Gamma}$, $(H'_{v})_{v \in V(\cal G')}$, $(H'_{e})_{e \in E(\cal G')}$, $(\delta'(e))_{e \in E(\cal G')})$
be equal to $H$ everywhere except that $\delta^{\prime}(e_{0})=$ $H_{\tau(e_{0})}(w_{e_{0}})^{-1} \delta(e_{0}) w_{e_{0}}$\,.

Then $H^{\prime}$ 
determines 
a 
well-defined
graph-of-groups automorphism which is conjugate to $H$ 
via 
$H_{0}$.
More precisely, we have $H^{\prime}=H_{0} \circ H \circ H_{0}^{-1}$, 
and hence in particular $\hat{H}^{\prime}= \hat{H}_{0} \circ \hat{H} \circ \hat{H}_{0}^{-1}$.
\end{lem}

\begin{proof}
In order
to 
show that 
$H^{\prime}$ is a 
well-defined
graph-of-groups isomorphism, 
it is 
sufficient to verify that $H_{\tau(e_0)} \circ f^{\prime}_{e_0}=ad_{\delta^{\prime}(e_{0})} \circ f^{\prime}_{e_0} \circ H_{e_0}$.

For every $g \in G_{e_0}$
we have
\begin{align*}
ad_{\delta^{\prime}(e_{0})} \circ f^{\prime}_{e_0} \circ H_{e_0} (g)
&= H_{\tau(e_{0})}(w_{e_{0}})^{-1} \delta(e_{0}) w_{e_0} f^{\prime}_{e_0}(H_{e_0}(g)) w_{e_0}^{-1} \delta(e_0)^{-1}H_{\tau(e_{0})}(w_{e_{0}})\\
&= H_{\tau(e_{0})}(w_{e_{0}})^{-1} \delta(e_{0}) f_{e_0}(H_{e_0}(g))\delta(e_0)^{-1} H_{\tau(e_{0})}(w_{e_{0}})\\
&= H_{\tau(e_{0})}(w_{e_{0}})^{-1} H_{\tau(e_0)}(f_{e_0}(g)) H_{\tau(e_{0})}(w_{e_{0}}) \\
&= H_{\tau(e_0)}(w_{e_0}^{-1}f_{e_0}(g)w_{e_0})\\
&= H_{\tau(e_0)}\circ f^{\prime}_{e_0}(g)
\end{align*}

Moreover, we have the following diagram commutes:
\[
\begin{CD}
\cal{G} @>H>> \cal{G} \\
@VH_{0} VV @VVH_{0} V \\
\cal{G}^{\prime}  @>>H^{\prime}> \cal{G}^{\prime} 
\end{CD}
\]

The only non-trivial part 
here 
is to verify that the following equation holds for all edges:
\begin{align*}
H_{0,H_{\Gamma_0}(\tau(e))}(\delta(e)) \delta_0(H_{\Gamma}(e))
= H^{\prime}_{H_{\Gamma_0}(\tau(e))}(\delta_{0}(e))\delta^{\prime}(H_{\Gamma_0}(e))
\end{align*}

When $e \neq e_0$, this equation 
follows from
$\delta(e)=\delta^{\prime}(e)$ which holds automatically by definition.

When $e = e_0$, 
we 
Compute both, the left and the right hand side:
\begin{align*}
\text{Left:}   
\,\,\, H_{0,H_{\Gamma_0}(\tau(e))}(\delta(e)) \delta_0(H_{\Gamma}(e)) 
&
=\delta(e_0)\delta_0(e_0) = \delta(e_0) w_{e_0} \\
\text{Right:} 
\,\,\, H^{\prime}_{H_{\Gamma_0}(\tau(e))}(\delta_{0}(e))\delta^{\prime}(H_{\Gamma_0}(e))
&
=H^{\prime}_{\tau(e_0)}(\delta_0(e_0))\delta^{\prime}(e_0) \\
& 
=H^{\prime}_{\tau(e_0)}(w_{e_0})\delta^{\prime}(e_0) \\
&
=H_{\tau(e_0)}(w_{e_0})\delta^{\prime}(e_0)
=\delta(e_0) w_{e_0}
\end{align*}

Hence we have 
shown that 
the equation holds for all edges, and that $H^{\prime}=H_{0} \circ H \circ H_{0}^{-1}$, which implies $\hat{H}^{\prime}= \hat{H}_{0} \circ \hat{H} \circ \hat{H}_{0}^{-1}$.
\end{proof}

\begin{rem}
\label{Bass-Serre-gog-auto}
Similar to the equivalence between $\cal G$ and $\cal G'$ discussed above, Lemma \ref{basiclemma} gives rise to an equivalence between graph-of-groups automorphisms, which is natural in the following sense: 

Assume that 
some group $G$ acts on a simplicial tree $T$, 
and let $\cal G_T$ be the ``quotient graph-of-groups''  of $T$ modulo $G$ mentioned above.
Assume furthermore that for some outer automorphism $\phi$ of $G$ 
the tree 
$T$ is ``$\phi$-invariant'', by which one means that the translation length function $\| \cdot \|_{T}$, defined on the conjugacy classes of $G$ by setting all edge lengths equal to 1, is preserved by $\phi$:
$$\|\phi [g]\|_{T} = \|[g]\|_{T} \quad \text{for all} \quad g \in G$$
Now Bass-Serre theory is set up in such a way that this $\phi$-invariance of $T$ is equivalent to the existence of
a graph-of-groups automorphism $H: \cal G_T \to \cal G_T$ which induces the given automorphism $\phi$.

However, in this situation the automorphism $H$ is determined only up to 
an equivalence which is generated by the elementary equivalence  $H \sim H'$, where $H$ and $H'$ are precisely as given in Lemma \ref{basiclemma} above.
\end{rem}

A statement analogous to Lemma~\ref{basiclemma} with $e_0$ replaced by a family of edges can be derived by applying Lemma~\ref{basiclemma} iteratively.

\section{Dehn Twists}

\subsection{Classical Dehn Twists}

${}^{}$

We first recall the classical definition of 
a 
Dehn twist 
as given in \cite{CL95}. 

\begin{defn}[Classical Dehn twist]
\label{Dehn-twist-defn}
An automorphism 
$D: \cal{G} \rightarrow \cal{G}$
of a graph-of-groups $\cal{G}$ is called a {\it 
(classical) 
Dehn twist} if it satisfies:

\begin{enumerate}
\item $D_{\Gamma} = id_{\Gamma}$;
\item $D_{v} = id_{G_{v}}$, for all $v \in V(\Gamma)$;
\item $D_{e} = id_{G_{e}}$, for all $e \in E(\Gamma)$;
\item for each $G_{e}$, there is an element $\gamma_{e} \in Z(G_{e})$ such that $\delta(e)=f_{e}(\gamma_{e})$, where $Z(G_{e})$ denotes the center of $G_{e}$.
\end{enumerate}

We denote a Dehn twist defined as above by 
$D=D(\cal{G},(\gamma_{e})_{e \in E(\cal{G})})$.
\end{defn}

\begin{defn}[Twistor] 
Given a Dehn twist 
$D=D(\cal{G},(\gamma_{e})_{e \in E(\cal{G})})$, 
we define the 
{\it twistor} of an edge $e\in E(\Gamma)$ 
by 
setting
$z_{e}=\gamma_{\bar{e}}\gamma_{e}^{-1}$.
Then 
for any edge $e$ 
we have 
$z_{e} \in Z(G_{e})$ and $z_{\bar{e}}=\gamma_{e} \gamma_{\bar{e}}^{-1}=z^{-1}_{e}$.
\end{defn}

\begin{rem}
The induced automorphism $D_{*} :\Pi(\cal{G}) \rightarrow \Pi(\cal{G})$ 
is defined on generators as
follows:
\begin{enumerate}
\item[] $D_{*}(g)=g$, for $g \in G_{v}$, $v\in V(\Gamma)$;
\item[] $D_{*}(t_{e}) = t_{e}f_{e}(z_{e})$, for every $e \in E(\Gamma)$.
\end{enumerate}

In particular, the induced automorphism on the fundamental group, $D_{*v}: \pi_{1}(\cal{G}, v) \rightarrow \pi_{1}(\cal{G},v)$, where $v \in V(\Gamma)$, is called a 
{\it Dehn twist automorphism}.
\end{rem}

\begin{rem}
It follows from Proposition 5.4 in \cite{CL99}
that in many situations a Dehn twist on a given graph-of-groups is uniquely determined by its twistors.
Thus sometimes we may define a Dehn twist by its twistors $(z_{e})_{e\in E(\Gamma)}$ 
(With $z_{e} \in Z(G_{e})$ and $z_{\bar{e}}=z_{e}^{-1}$, for each $e\in E(\Gamma)$). 
In this case, we may conversely define:
\begin{equation*}
\gamma_{e}=\left\{
\begin{aligned}
& z_{e}^{-1}, & e\in E^{+}(\Gamma) \\
& 1,  & e\in E^{-}(\Gamma).
\end{aligned}
\right.
\end{equation*}
\end{rem}

\begin{defn}\label{Dehntwist}
A group automorphism $\phi : G \rightarrow G$ is said to be a {\it Dehn twist automorphism} if 
it is 
represented by a graph-of-groups Dehn twist. In other words, there exists a graph-of-groups
$\cal G$, a Dehn twist $D: \cal{G} \rightarrow \cal{G}$, 
and an 
isomorphism
$\theta: G \rightarrow \pi_{1}(\cal{G},v)$ such that $\phi=\theta^{-1} \circ D_{*v} \circ \theta$.

In this case the induced outer automorphism 
$\hat{\phi} \in Out(G)$
is called a {\it Dehn twist outer automorphism}.
\end{defn} 

\begin{rem}
Notice that,
for a 
given
Dehn twist automorphism $\phi: G \rightarrow G$, its Dehn twist representative is 
in general
not unique. 

In the special case where 
$G$ is a free group, 
such ``unique'' Dehn twist representatives are given by {\em efficient} Dehn twists. For details see \cite{CL95} or Section~3.3 in \cite{KY02}.

\end{rem}

\begin{rem}
It is easy to 
see 
that every multiple Dehn twist homeomorphism $h$ on a 
compact 
surface $S$ (possibly with finitely many boundary components), as defined in \cite{FM}, gives rise to a graph-of-groups Dehn twist $D: \cal{G} \rightarrow \cal{G}$. 
Here $\cal G$ is 
a graph-of-groups decomposition of the surface group 
$\pi_{1}(S)$
modeled the decomposition of $S$ by the twistor curves,  
and 
the 
Dehn twist $D$ on 
$\cal{G}$
defines 
the same outer automorphism on 
$\pi_{1}(\cal{G}) \cong \pi_{1}(S)$ 
as the 
given multiple 
Dehn twist homeomorphism $h$. 
See \cite{KY-thesis} and section 6 of \cite{KY03} for a more detailed explanation.
\end{rem}

\subsection{General Dehn Twists}
${}^{}$
There are several places in the literature (e.g. see \cite{ClayPettet10, Levitt, RipsSela94}) where the notion of a ``Dehn twist'' by means of graph-of-groups automorphisms have been defined; these definitions all agree 
in essence,
but are slightly distinct in their technical specifications. 
Thus subsection is meant as
contribution to 
a unification of these concepts.

Among the various alternatives to classical Dehn twists as presented in the previous subsection, the idea to simplify the concept of twists on non-trivial edge group elements through keeping the edge groups trivial and working with ``interesting'' correction terms has its strong merits, but also some defaults (see 
the opening paragraph of subsection \ref{partial-D}).
It leads to graph-of-groups automorphisms as considered in the next proposition; these are particularly important as they occur 
naturally in the context of Bestvina-Handel's train track maps in \cite{BH92}.

\begin{prop}\label{trivial edge groups}
${}$
Let $\cal{G}$ be a graph-of-groups such that 
the edge groups $G_e$ 
are trivial 
for all $e \in E(\cal{G})$. Let $H: \cal{G} \rightarrow \cal{G}$ be an automorphism such that 
\begin{enumerate}
\item[$\diamond$] $H_{\Gamma}$ acts on $\Gamma(\cal{G})$ as identity;
\item[$\diamond$] $H_{v}: G_{v} \rightarrow G_{v}$ is 
the identity map, 
for all $v \in V(\cal{G})$. 
\end{enumerate}
Then the induced automorphism $\hat H \in \Out(\pi_1 \cal G)$ is a Dehn twist automorphism.
\end{prop}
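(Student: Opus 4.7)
The plan is to realize $H_*$ as a Dehn twist automorphism by constructing a new graph-of-groups $\cal{G}'$ together with a Dehn twist $D : \cal{G}' \to \cal{G}'$ and an isomorphism $\theta : \pi_1(\cal{G}) \to \pi_1(\cal{G}')$ satisfying $D_* \circ \theta = \theta \circ H_*$. Since all $G_e$ in $\cal{G}$ are trivial, the data of $H$ reduces to the correction terms $\delta(e) \in G_{\tau(e)}$; the induced map $H_*$ fixes every vertex group pointwise and satisfies $H_*(t_e) = \delta(\bar{e}) \, t_e \, \delta(e)^{-1}$ for each edge $e$. In particular, the only Dehn twist on $\cal{G}$ itself is the identity (centers of trivial groups are trivial), so the sought Dehn twist necessarily lives on a different graph-of-groups.

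For the construction of $\cal{G}'$, I subdivide each edge $e \in E^+(\cal{G})$ by inserting a new vertex $w_e$, producing two sub-edges $e_1$ from $\tau(\bar{e})$ to $w_e$ and $e_2$ from $w_e$ to $\tau(e)$. Set $G_{w_e} = \langle x_e \rangle * \langle y_e \rangle$, a free product of two cyclic groups whose orders match the orders of $\delta(\bar{e})$ in $G_{\tau(\bar{e})}$ and of $\delta(e)$ in $G_{\tau(e)}$ respectively (degenerating to the trivial group if the corresponding $\delta$ is trivial). Let $G_{e_1} = \langle x_e \rangle$ with edge homomorphisms $f_{e_1}(x_e) = x_e \in G_{w_e}$ and $f_{\bar{e_1}}(x_e) = \delta(\bar{e}) \in G_{\tau(\bar{e})}$; symmetrically set $G_{e_2} = \langle y_e \rangle$ with $f_{\bar{e_2}}(y_e) = y_e$ and $f_{e_2}(y_e) = \delta(e)$. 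By choice of orders, all four edge homomorphisms are injective.

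I then define $\theta$ on generators by $g \mapsto g$ for $g$ in any vertex group of $\cal{G}$ and $t_e \mapsto t_{e_1} t_{e_2}$, and claim it extends to an isomorphism $\pi_1(\cal{G}) \to \pi_1(\cal{G}')$. This is because the two edge relations at the endpoints of $e$ force $x_e = \delta(\bar{e})$ and $y_e = \delta(e)$ (after conjugation by the tree-path stable letters), so the auxiliary generators of $G_{w_e}$ are absorbed into the adjacent vertex groups; applying Bass-Serre normal form, the resulting presentation of $\pi_1(\cal{G}')$ coincides with that of $\pi_1(\cal{G})$. Next, I equip $\cal{G}'$ with the Dehn twist $D$ specified by the central elements $\gamma_{\bar{e_1}} = x_e$, $\gamma_{e_1} = 1$, $\gamma_{\bar{e_2}} = 1$, $\gamma_{e_2} = y_e$; since the edge groups are cyclic (hence abelian), these lie in the required centers, so $D$ is a legitimate Dehn twist, and the corresponding correction terms are $\delta_D(\bar{e_1}) = \delta(\bar{e})$, $\delta_D(e_1) = 1$, $\delta_D(\bar{e_2}) = 1$, $\delta_D(e_2) = \delta(e)$. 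A direct calculation using the formula $D_*(t_f) = \delta_D(\bar{f}) \, t_f \, \delta_D(f)^{-1}$ then yields $D_*(t_{e_1} t_{e_2}) = \delta(\bar{e}) (t_{e_1} t_{e_2}) \delta(e)^{-1} = \theta(H_*(t_e))$, and combined with the identity action on all vertex groups this gives $D_* \circ \theta = \theta \circ H_*$.

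The main obstacle is the verification in the third paragraph that $\theta$ is indeed an isomorphism of fundamental groups; this reduces edge-by-edge to two classical cases, namely a bridge edge between two distinct vertices (handled by two successive amalgamations collapsing $G_{w_e}$ into $G_{\tau(\bar{e})} * G_{\tau(e)}$) and a loop edge at a single vertex (handled by an analogous HNN computation producing $G_v * \mathbb{Z}$), each a routine application of the normal form theorem recalled at the beginning of Section~2. The degenerate situations where $\delta(\bar{e})$ or $\delta(e)$ has finite or trivial order are covered uniformly by choosing the matching cyclic factor of $G_{w_e}$ accordingly, so that $f_{\bar{e_1}}$ and $f_{e_2}$ remain injective in all cases.
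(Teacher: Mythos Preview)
Your argument is correct and rests on the same core idea as the paper's proof: subdivide each edge so that the correction terms $\delta(e),\delta(\bar e)$ become images of central elements in cyclic edge groups, thereby exhibiting $H_*$ as a classical Dehn twist on a modified graph-of-groups. The verification that $\theta$ is an isomorphism via the identity $A *_{C}(C*D)\cong A*D$ (and its HNN analogue) is exactly the right tool.

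The paper organizes the same idea somewhat differently. It first introduces the auxiliary notion of a \emph{general Dehn twist} (an automorphism with $D_\Gamma,D_v,D_e$ all identities and $\delta(e)\in C(f_e(G_e))$ rather than $f_e(Z(G_e))$), proves that any general Dehn twist is equivalent to a classical one, and then observes that your hypothesis (trivial edge groups) makes $H$ automatically a general Dehn twist, since the centralizer of the trivial subgroup is the whole vertex group. In the paper's reduction, one subdivides an edge $e_0$ once per problematic correction term, inserting a new vertex whose group is the \emph{subgroup} $\langle f_{e_0}(G_{e_0}),\delta(e_0)\rangle\subset G_{\tau(e_0)}$; for trivial edge groups this yields two subdivisions per edge, each adding a cyclic vertex group. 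Your construction is the special case with both subdivisions performed at once and the resulting middle trivial edge collapsed, so that the single new vertex carries the free product $\langle x_e\rangle*\langle y_e\rangle$ of the two cyclic pieces. What the paper's route buys is a statement valid beyond trivial edge groups (any $\delta(e)$ centralizing $f_e(G_e)$ suffices); what your route buys is a cleaner, single-step construction tailored to the case at hand, with no need to iterate.
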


The proof of this proposition is postponed (see 
Remark \ref{three-eleven}); 
we first want to enlarge our concept of a ``Dehn wist automorphism'' to include other notions mentioned above. Our Definition \ref{dehn} below includes in particular also the concept set up in 
\cite{Levitt}, which to our knowledge is the most general among the ones presently in the literature. Be aware, however, that in \cite{Levitt} Levitt's terminology for ``Dehn twist'' corresponds to what is called here ``Dehn twist outer automorphism'' 
(see Definition~\ref{Dehntwist}).

In any case, it will be shown in Proposition \ref{equivalent Dehn twist} below that all these generalizations differ from the original ``classical'' Dehn twist concept only in their presentation and not really in substance.

\begin{defn}[General Dehn twist]
\label{dehn}
Let $\cal{G}$ be a graph-of-groups. An automorphism $D: \cal{G} \rightarrow \cal{G}$ is called a {\it {general Dehn twist}} if 

\begin{enumerate}
\item[$\diamond$] $D_{\Gamma}=id_{\Gamma}$;
\item[$\diamond$] $D_{v}=id_{G_{v}}$, for 
any
vertex $v \in V(\cal{G})$;
\item[$\diamond$] $D_{e}=id_{G_{e}}$, for 
any 
edge $e \in E(\cal{G})$;
\item[$\diamond$] $\delta(e) \in C(f_{e}(G_{e}))$, where $C(f_{e}(G_{e}))$ denotes the centeralizer of $f_{e}(G_{e})$ in $G_{\tau(e)}$, for 
any 
$e \in E(\Gamma)$. 
\end{enumerate}
\end{defn}

\begin{prop}
\label{equivalent Dehn twist}
The 
notions of a
``classical Dehn'' twist and 
of a 
``general Dehn'' twist are equivalent in the following sense:
\begin{enumerate}
\item
Every classical Dehn twist is a general Dehn twist.
\item
To every general Dehn twist $D: \cal G \to \cal G$ we can canonically associate a 
classical
Dehn twist $D': \cal G' \to \cal G'$ and an isomorphism $\theta_v: \pi_1(\cal G, v) \to \pi_1(\cal G', v')$ (for any vertex $v$ 
of 
$\cal G$ and a corresponding vertex $v'$ 
of
$\cal G'$) such that $D'_{* v'} \circ \theta_v = \theta_v \circ D_{* v}$.
\end{enumerate}
In particular, the outer automorphism $\hat{D}$ defined by a general Dehn twist is a Dehn twist automorphism.
\end{prop}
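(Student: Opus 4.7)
My plan is to handle the two implications separately. Part (1) I would dispatch by direct verification: for a classical Dehn twist $D = D(\cal{G},(\gamma_e)_e)$, the first three bullets of Definition~\ref{dehn} match those of a classical Dehn twist on the nose, and the fourth, $\delta(e) \in C(f_e(G_e))$, follows immediately from $\delta(e) = f_e(\gamma_e)$ with $\gamma_e \in Z(G_e)$: for any $g \in G_e$, $\delta(e)\, f_e(g)\, \delta(e)^{-1} = f_e(\gamma_e g \gamma_e^{-1}) = f_e(g)$.

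For Part (2), given a general Dehn twist $D: \cal{G} \to \cal{G}$, I construct $\cal{G}'$ by subdividing every edge of $\cal{G}$ into three pieces, carefully enlarging the outer edge groups so that each correction term becomes the image of a central element. For each edge $e$ of $\cal{G}$ with $\tau(e) = w$ and $\tau(\bar e) = v$, replace $e$ by a path $e_-, e_0, e_+$ through two new vertices $v^-_e, v^+_e$. Setting
\[
A_{\bar e} := \langle f_{\bar e}(G_e),\, \delta(\bar e)\rangle \le G_v \quad \text{and} \quad A_e := \langle f_e(G_e),\, \delta(e)\rangle \le G_w,
\]
the general Dehn twist hypothesis guarantees $\delta(\bar e) \in Z(A_{\bar e})$ and $\delta(e) \in Z(A_e)$. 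I declare $G'_{v^-_e} := A_{\bar e}$ and $G'_{v^+_e} := A_e$, give $e_-$ edge group $A_{\bar e}$ with identity injection into $G'_{v^-_e}$ and the canonical inclusion into $G_v$, symmetrically give $e_+$ edge group $A_e$ with identity into $G'_{v^+_e}$ and inclusion into $G_w$, and give the middle edge $e_0$ edge group $G_e$ with injections $f_{\bar e}$ into $A_{\bar e}$ and $f_e$ into $A_e$.

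I then define $D': \cal{G}' \to \cal{G}'$ as the identity on the graph, on every vertex group, and on every edge group, with twistor family $\gamma_{\bar{e_-}} := \delta(\bar e) \in Z(A_{\bar e})$ and $\gamma_{e_+} := \delta(e) \in Z(A_e)$, all other $\gamma$'s trivial; this is a classical Dehn twist by construction. A Bass--Serre computation using any spanning tree of $\Gamma(\cal{G}')$ that contains all the $e_-$ and $e_+$ collapses the added vertices $v^-_e, v^+_e$ back into $v, w$ via the trivial amalgamations along the identity injections, yielding a canonical isomorphism $\theta_v: \pi_1(\cal{G}, v) \to \pi_1(\cal{G}', v)$ that is the identity on vertex-group elements and sends the stable letter $t_e$ to the concatenation $t_{e_-} t_{e_0} t_{e_+}$. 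The defining relations of $\Pi(\cal{G}')$ at each of $e_-, e_0, e_+$ can then be chained together to verify that this recipe respects the relation $f_{\bar e}(g) = t_e f_e(g) t_e^{-1}$ of $\Pi(\cal{G})$, so that $\theta_v$ is well-defined and an isomorphism.

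To close I verify $D'_{*v'} \circ \theta_v = \theta_v \circ D_{*v}$ on generators. Both sides are the identity on vertex-group elements, so only $t_e$ is at issue. Using $D'_*(t_{e_i}) = t_{e_i}\, f'_{e_i}(z_{e_i})$ with $z_{e_-} = \delta(\bar e)$, $z_{e_0} = 1$, $z_{e_+} = \delta(e)^{-1}$, the left-hand side evaluates to $t_{e_-}\, \delta(\bar e) \cdot t_{e_0} \cdot t_{e_+}\, \delta(e)^{-1}$; the $e_-$-relation in $\Pi(\cal{G}')$, namely $t_{e_-} g t_{e_-}^{-1} = g$ for $g \in A_{\bar e}$ (both injections of $e_-$ are the identity / inclusion of $A_{\bar e}$), lets me commute $\delta(\bar e)$ past $t_{e_-}$, giving $\delta(\bar e)\, t_{e_-} t_{e_0} t_{e_+}\, \delta(e)^{-1} = \theta_v(\delta(\bar e) t_e \delta(e)^{-1}) = \theta_v(D_*(t_e))$. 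The main subtlety will be the Bass--Serre bookkeeping needed to ensure $\theta_v$ is genuinely canonical (independent of auxiliary spanning-tree choices, at worst up to an inner automorphism that the intertwining absorbs) and to run the same construction uniformly for loop edges where $v = w$.
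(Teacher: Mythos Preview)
Your proof is correct and follows the same underlying idea as the paper, but your implementation is structurally different. The paper handles one ``non-classical'' correction term $\delta(e_0)$ at a time: it subdivides the single offending edge $e_0$ into \emph{two} pieces $e_0', e_0''$, inserting one new vertex $v_0$ with group $\langle f_{e_0}(G_{e_0}),\delta(e_0)\rangle$, so that the problematic $\delta(e_0)$ becomes central in the new edge group $G_{e_0''}$; the map $\theta$ sends $t_{e_0}\mapsto t_{e_0'}t_{e_0''}$. It then iterates this step over all remaining non-classical correction terms. Your version instead subdivides \emph{every} edge into \emph{three} pieces in one pass, inserting two new vertices carrying $A_{\bar e}$ and $A_e$, and sends $t_e\mapsto t_{e_-}t_{e_0}t_{e_+}$. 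Your approach is more symmetric (both ends of each edge are treated at once, and loop edges require no special care) at the cost of producing a larger graph $\cal G'$; the paper's iterative approach is more economical but requires the inductive bookkeeping of repeated subdivision. The commutation check $\theta_v\circ D_{*v}=D'_{*v'}\circ\theta_v$ is essentially the same computation in both versions, and your use of the Bass relation for $e_-$ to slide $\delta(\bar e)$ past $t_{e_-}$ is exactly parallel to the paper's verification on $t_{e_0}$.
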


\begin{proof}
(1)
This follows immediately from $f_e(Z(G_e)) \subset C(f_e(G_e))$.

\smallskip
\noindent
(2)
We 
first 
consider a 
special case: 

Let 
$D: \cal{G} \rightarrow \cal{G}$ 
be a general Dehn twist, 
with all correction terms 
as 
defined in the classical 
case 
except for 
a single 
edge 
$e$:
we suppose 
$\delta(e) \in C(f_{e}(G_{e}))\subset G_{\tau(e)}$
but 
$\delta(e) \notin f_{e}(Z(G_e))$.

Then we  
define 
a classical Dehn twist $D^{\prime}: \cal{G}^{\prime} \rightarrow \cal{G}^{\prime}$ 
which is 
obtained 
as follows:
\begin{enumerate}
\item [$\diamond$] 
The 
graph-of-groups $\cal{G}^{\prime}$ is obtained from $\cal G$ 
by 
introducing a new vertex $v_0$
which subdivides 
the 
edge 
$e$
into 
$e^{\prime}$
and
$e^{\prime \prime}$, 
with 
$\iota(e^{\prime})=\iota(e)$, 
$\tau(e^{\prime})=v_{0}=\iota(e^{\prime \prime})$ 
and 
$\tau(e^{\prime \prime})=\tau(e)$, 
by 
setting 
$$G_{e^{\prime}}=G_{e}
\qquad \text{and} \qquad
G_{e^{\prime \prime}}= G_{v_{0}}= \langle f_{e}(G_{e}), \delta(e) \rangle ,$$
and 
by 
defining the edge homomorphisms 
through 
\begin{align*}
& f_{e^{\prime}}(x) = f_{e}(x),\  f_{\bar{e}^{\prime}}(x)=f_{\bar{e}}(x)\  
\quad \text{for all} \quad
x \in G_{e^{\prime}}=G_e \, , \\
&
\quad \text{and} \quad
f_{e^{\prime \prime}}=id_{G_{e''}}, \  f_{\bar{e}^{\prime \prime}}=id_{G_{e''}}. 
\end{align*}

\item [$\diamond$] 
The 
Dehn twist $D^{\prime}$ is defined 
by 
setting 
$$ D^{\prime}_{e^{\prime}}=id_{G_{e^{\prime}}}, \  
D^{\prime}_{e^{\prime \prime}}=id_{G_{e^{\prime \prime}}}, \ 
D^{\prime}_{v_0}=id_{G_{v_{0}}},$$
and 
by 
choosing
$$\delta^{\prime}(\bar{e}^{\prime})=\delta(\bar{e}), \
\delta^{\prime}(e^{\prime})=1_{G_{v_0}},\ 
\delta^{\prime}(\bar{e}^{\prime \prime})=1_{G_{v_0}}, \
\delta^{\prime}(e^{\prime \prime})=\delta(e).$$  
The data for 
$D^{\prime}$  
are 
equal 
to those for 
$D$ everywhere else.

\end{enumerate} 

Since $\delta(e)$ is assumed to lie in  $C(f_{e}(G_{e}))$, any element in $f_{e}(G_{e})$ commutes with $\delta(e)$. As $\delta(e)$ also commutes with itself, we derive immediately that $\delta^{\prime}(e^{\prime \prime})=\delta(e)$ commutes with all elements contained in $\<f_{e}(G_{e}), \delta(e) \> = f_{e''}(G_{e^{\prime\prime}})$, i.e. $\delta^{\prime}(e^{\prime \prime}) \in Z(f_{e''}(G_{e^{\prime\prime}}))=f_{e''}(Z(G_{e^{\prime\prime}}))$. Thus $D^{\prime}$ is a classical Dehn twist as given in Definition \ref{Dehntwist}.

The Dehn twist $D^{\prime}$ is ``equal'' to $D$ in the following sense: for every vertex $v \neq v_{0}$ from $\cal G'$ there is a corresponding vertex for $\cal G$ which we also call $v$.

Consider the homomorphism $\theta: \Pi(\cal{G}) \rightarrow \Pi(\cal{G}^{\prime})$ defined on generators by $t_{e} \mapsto t_{e^{\prime}}t_{e^{\prime \prime}}$ and by $g \mapsto g$ otherwise. 
Then we claim that, for every vertex $v \neq v_0$, the restriction of $\theta$ to $\pi_{1}(\cal{G},v)$ defines an isomorphism $\theta_{v}: \pi_{1}(\cal{G},v) \rightarrow \pi_{1}(\cal{G}^{\prime} , v)$. 

To see this, we first observe that because of 
$G_{v_0}= f_{\bar{e}^{\prime\prime}}(G_{e^{\prime\prime}})$ a reduced word in $\Pi(\cal G')$ can not contain as subword any word of 
type
$t_{\bar e''} g t_{e''}$ with $g \in G_{v_0}$. Since furthermore $e''$ and $\bar e'$ are the only edges issuing from $v_0$, it follows 
for 
any reduced word
$W \in \pi_1(\cal G', v)$ 
with 
$v \neq v_0$,
after appropriately applying the relation $f_{{e'}}(g)=t_{e'}^{-1}f_{\bar e'}(g)t_{e'}$ from Definition \ref{bassgroup}, that any occurrence of $t_{e''}$ in $W$ is preceded by $t_{e'}$ and any $t_{\bar e''}$ is succeeded by $t_{\bar e'}$. This proves the surjectivity of the map $\theta_v$, since by Proposition~\ref{propbassgroup} (1) it suffices to consider reduced words. The injectivity is a direct consequence of part (3) of the same proposition.

To conclude the proof in the special case we now observe that 
$\theta$ 
gives rise to the following 
diagram:
\[
\begin{CD}
\pi_{1}(\cal{G},v) @>D_{*v}>> \pi_{1}(\cal{G},v) \\
@V\theta_{v} VV @VV\theta_{v} V \\
\pi_{1}(\cal{G}^{\prime}, v)  @>>D^{\prime}_{*v}> \pi_{1}(\cal{G}^{\prime}, v) 
\end{CD}
\]
It follows directly from the definition of the maps involved that this diagram is commutative;
the only 
non-trivial 
argument 
needed 
is 
given by:

\begin{align*}
\theta \circ D_{*}(t_{e}) 
 = \theta(\delta(\bar{e})t_{e} \delta(e)^{-1}) 
& = \delta(\bar{e}) t_{e^{\prime}} t_{e^{\prime \prime}} \delta(e)^{-1} \\
& = D_{*}^{\prime}(t_{e^{\prime}})D_{*}^{\prime }(t_{e^{\prime \prime}})
 = D^{\prime}_{*}(t_{e^{\prime}} t_{e^{\prime \prime}}) \\
& = D^{\prime}_{*} \circ \theta(t_{e}).
\end{align*}

In the general case, where $D$ is 
a general Dehn twist which 
may 
have more than one correction 
term 
defined in the 
``non-classical way'', we may apply the above 
treated special case 
repeatedly 
to each of the ``non-classical'' correction terms, 
to eventually obtain a classical Dehn twist.
\end{proof}

\begin{rem}
\label{three-eleven}
One obtains now the statement of 
Proposition~\ref{trivial edge groups} 
as direct consequence of 
Proposition \ref{equivalent Dehn twist} (2), 
since the graph-of-group automorphism $H$ from Proposition \ref{trivial edge groups} is clearly a general Dehn twist.

\end{rem}

\subsection{Partial Dehn twists}
\label{partial-D}

${}^{}$

The type of Dehn twists as considered in Proposition \ref{trivial edge groups} has a strong appeal, due to its simplicity. It is furthermore of natural interest because it is used in relative train track theory (see \cite{BH92}). However, it should be noted that not every outer Dehn twist automorphism, even for a free group of finite rank, can be represented by such simple graph-of-groups automorphisms, as illustrated by the following example:

\begin{rem}
\label{example-Brian}
We consider $F_3 = F(a,b,c)$ and the automorphism $\phi \in \Out(F_3)$ which acts on the generators by sending $a \mapsto a$, $b \mapsto ba$ and 
$c \mapsto (aba^{-1}b^{-1}) c (aba^{-1}b^{-1})^{-1}$.
This is the Dehn twist automorphism induced by the partial Dehn twist given in Example~\ref{no-blow-up}.

In order to see that this Dehn twist automorphism 
can not be realized by a Dehn twist $D: \cal G \to \cal G$ as in Proposition \ref{trivial edge groups}, i.e. based on a graph-of-groups $\cal G$ with all edge groups trivial, we observe that for such a graph-of-groups every vertex group is a free factor of $\pi_1 \cal G \cong F(a, b, c)$. From the algebraic prescriptions 
$a \mapsto a, \, b \mapsto ba, \, c \mapsto (aba^{-1}b^{-1}) \, c \, (aba^{-1}b^{-1})^{-1}$ 
we derive that there must be two edges in $\cal G$, with precisely one of them a loop edge. As a consequence that there must also be precisely two vertices in $\cal G$, and none of the vertex groups can be trivial. It follows that both vertex groups must have rank 1. Since the conjugacy class of the two twisters are prescribed by the action on $\pi_1 \cal G$, it follows that one of the edges at one of its endpoints must have as correction term an element conjugate to $aba^{-1}b^{-1}$. Such a commutator, however, can not be contained in any of the vertex groups, if the latter is a free factor of $F(a, b, c)$ of rank 1. Hence a graph-of-groups $\cal G$ as required for the Dehn twist in question does not exist.
\end{rem}

In order to make up for this defect, we now introduce the following notion of {\it partial Dehn twist}, which is more thoroughly studied in \cite{KY02}, \cite{KY03}, as well as Section~\ref{partialDehntwist} of this article .

\begin{defn}[Partial Dehn twist]
\label{partial Dehn twist}
Let $\cal{G}$ be a graph-of-groups, 
and let $\cal{V}_{0} \subset V(\cal{G})$ be a set of vertices which has the property 
that 
any edge $e$ 
with 
$\tau(e)=v \in \cal{V}_0$
has trivial edge group
$G_e$.

A {\it partial Dehn twist relative to $\cal{V}_{0}$} is a graph-of-groups isomorphism $H:\cal{G} \rightarrow \cal{G}$ such that 
\begin{enumerate}

\item[$\diamond$] $H_{\Gamma}: \Gamma(\cal{G}) \rightarrow \Gamma(\cal{G})$ is 
the 
identity;
\item[$\diamond$] $H_{e}=id_{G_{e}}$ for all edges 
$e \in E(\cal G)$;
\item[$\diamond$] $H_{v}=id_{G_{v}}$ for all vertices $v \notin \cal{V}_0$ 
(while 
$H_{v}$ is 
any 
group isomorphism for all $v \in \cal{V}_{0}$);
\item[$\diamond$] $\delta(e) \in C(f_{e}(G_{e}))$ for all edges
$e \in E(\cal G)$.

\end{enumerate}

\end{defn}

The case which is of most interest to us is that of a partial Dehn twist where all non-trivial vertex group automorphisms are Dehn twist automorphisms themselves. In order to simplify the notation, we include the identity map as {\em trivial Dehn twist} defined by a degenerate graph-of-groups that is based on the graph which consists of a single vertex only. We define:

\begin{defn}
\label{partial-D-rel-D}
${}^{}$
A graph-of-groups automorphism $H: \cal G \to \cal G$ is a {\em partial Dehn twist relative 
to a family of 
Dehn twist automorphisms} if $H$ is a partial Dehn twist as in Definition \ref{partial Dehn twist}, with the specification that
for every vertex $v \in V(\cal G)$ the 
map $H_v$ is a (possibly trivial) Dehn twist automorphism.
\end{defn}

\section{H-conjugation}\label{Hconj}

${}^{}$
Recall from Proposition \ref{propbassgroup} that for any element $W$ in the Bass group $\Pi(\cal G)$, represented by some reduced word $w \in W(\cal G)$, then any other reduced $w' \in W(\cal G)$ which also represents $W$ is connected if and only if $w$ is connected. We hence call $W$ in this case a {\em connected} element of $\Pi(\cal G)$. Similarly, the initial and terminal vertices $\iota(W)$ and $\tau(W)$ are well defined.

\begin{defn}
\label{H-conjugate}
Let 
$H: \cal G \to \cal G$ be an isomorphism of a graph-of-groups.
Let $W_{1}, W_{2}$ be 
non-trivial connected 
elements 
in the Bass group $\Pi(\cal{G})$. 
Then 
$W_{1}$  is said to be {\it {H-conjugate}} to $W_{2}$ if there exists 
a connected 
element 
$W\in \Pi(\cal{G})$ such that $W_{1}=W W_{2} H_{*}(W)^{-1}$.
This connected element $W$ is called {\it $H$-conjugator}.
\end{defn}

\begin{lem}
$H$-conjugacy
is 
an
equivalence relation on 
the set of non-trivial connected 
elements 
in 
$\Pi(\cal{G})$.
\end{lem}

\begin{proof}
Reflexivity and 
symmetry 
are obvious. 
In order to show
transitivity 
we observe that from 
$W_{1}= W W_{2} H_{*}(W)^{-1}$ and $W_{2}= W^{\prime} W_{3} H_{*}(W^{\prime})^{-1}$,
where $W$
and 
$W'$ are connected 
elements, 
one deduces 
$W_{1}= W W^{\prime} W_{3} H_{*}(W W^{\prime})^{-1}$.
Here 
$W W'$ 
is connected since 
$W_1$ and $W_2$ are non-trivial and connected, which implies that $W$ terminates at $\iota(W_2)$ while $W'$ initiates at $\iota(W_2)$.
\end{proof}

\begin{rem}
Let $H$ be an isomorphism of a graph-of-groups $\cal{G}$. 
Two 
non-trivial connected 
elements
$W_1, W_2 \in \Pi(\cal{G})$ are $H$-conjugate to each other if and only if $W^{-1}_1$ and $W^{-1}_2$ are $H^{-1}$-conjugate to each other.
\end{rem}

\begin{defn}
\label{H-trivial}
Let $H: \cal G \to \cal G$ be an isomorphism of a graph-of-groups, and let $W \in \Pi(\cal G)$ be a non-trivial connected 
element.

Then $W$  is said to be {\it {H-zero}} if there exists a connected 
element
$W' \in \Pi(\cal{G})$ such that the (possibly trivial) 
element
$W' W H_{*}(W')^{-1}$ 
is contained in 
a single
vertex group of $\cal G$.
\end{defn}

\begin{rem}
\label{comment-on-zero}
${}^{}$
There is an important particular reason why the trivial element of $\Pi(\cal G)$ is not contained in any of the $H$-conjugacy classes as they are defined in the above set-up. 
It is a rather tricky issue, which has been dealt with in detail in  Section~3 of \cite{LY1}.

This leads, however, to the following ``unexpected'' situation, due to the fact (obtained directly from Definitions \ref{H-conjugate}  and \ref{H-trivial}) that an element $W \in \Pi(\cal G)$ is $H$-zero if and only if any $H$-conjugate $W' \in \Pi(\cal G)$ is also $H$-zero: It could well be that all elements in the $H$-conjugacy class of $W$ are $H$-zero, but none of them is actually contained in some vertex group of $\cal G$, since the only such which is $H$-conjugate to $W$ is the trivial element.

\end{rem}

\begin{rem}
\label{H-zero-algorithmic}
It is also important noting that in the special case, where $\pi_1\cal G$ is a free group of finite rank and $H: \cal G \to \cal G$ is based on the identity map $H_\Gamma = id_{\Gamma(\cal G)}$, one can decide algorithmically whether a non-trivial connected element $W \in \Pi(\cal{G})$ is $H$-zero or not. 

${}^{}$
Indeed, 
by Definition \ref{H-trivial}, 
$W$ is $H$-zero if and only if one can write $W$ as product
$$W = W_1 g H_*(W_1)^{-1} \, ,$$
where $W_1 \in \Pi(\cal G)$ is also connected, and $g$ is contained in some vertex group $G_v$ of $\cal G$. 
By properly chosing $W_1$ we can assume here that $W_1$ is written as reduced word, and that there is no cancellation (other than within $G_v$) in the above product, which is hence reduced and of even $\cal G$-length 
$2r$, for $r = |W_1|_\cal G$. 

Thus, if $W$ is $H$-zero, it can be written as product $W = W' W''$, with $W'$ and $W''$ of $\cal G$-length $r$. The element $W'$ is not quite determined by $W$, but for all possible choices we always have $W'^{-1} W_1 \in G_v$. We conclude that $W$ is $H$-zero if and only if for any $W'$ as above we have that
$$W'^{-1} W H_*(W')$$
has $\cal G$-length 0 (which is equivalent to being contained in some vertex group).

For a formal proof of the last conclusion we observe that $g' := W'^{-1} W_1 \in G_v$ implies $W'' = W'^{-1} W = W'^{-1} W_1 g H_*(W_1)^{-1} = g' g H_*(W_1)^{-1}$ and hence $W'' H_*(W') = g' g H_*(W_1)^{-1} H_*(W') = g' g H_*(W_1^{-1} W') = g' g H_*(W_1^{-1} W') = g' g H_*(g'^{-1})$, where $H_*(g'^{-1}) \in G_v$ follows from our assumption $H_\Gamma = id_{\Gamma(\cal G)}$.
\end{rem}

\section{Quotient graph-of-groups isomorphism}

\subsection{Quotient graph-of-groups}

${}^{}$

Let $\cal{G}$ be a graph-of-groups and $\cal{G}_{0}$ be a sub-graph-of-groups of  
$\cal G$.
By sub-graph-of-groups we simply mean the restriction of $\cal{G}$ to a connected subgraph\footnote[1]{\ Be aware that this definition of sub-graph-of-groups is different from the one in \cite{Bass93}.}. Denote $\Gamma=\Gamma(\cal{G})$ and $\Gamma_0=\Gamma(\cal{G}_{0})$; by definition $\Gamma_{0}$ is a connected subgraph of $\Gamma$.

\medskip

We will denote 
by
$\bar{\cal{G}}= \cal{G} / \cal{G}_0$ the quotient graph-of-groups of $\cal{G}$ by $\cal{G}_{0}$, which we define now in detail:

The graph underlying $\bar{\cal{G}}$, denoted by $\bar{\Gamma}=\Gamma /\Gamma_{0}$ with 
$V(\bar{\Gamma})=(V(\Gamma)\smallsetminus V(\Gamma_0)) \cup \{V_0\}$ 
and $E(\bar{\Gamma})=E(\Gamma)\smallsetminus E(\Gamma_0)$, is obtained precisely by contracting $\Gamma_0$ into a vertex $V_0$ through the map:
\begin{align*}
q: \Gamma & \rightarrow \bar{\Gamma} \\
    x &  \mapsto V_{0} \text{\ \ \ if x $\in$ $E(\Gamma_0)$ or $V(\Gamma_0)$ } \\
    x & \mapsto X \text{\ \ \ otherwise } 
\end{align*}
Thus on
$\Gamma \smallsetminus \Gamma_0$ 
the map $q$ sends $x$, by which we mean 
either an edge or a vertex, to its natural correspondence in $\bar{\Gamma}$. We denote 
$q(x)$ 
by the same but capitalized letter 
$X \in \bar{\Gamma}$, to avoid confusion.

We choose a vertex 
\begin{equation}\label{choice1}
p_{0} \in V(\Gamma_{0})
\end{equation} 
as basepoint and set 
$G_{V_0}=\pi_{1}(\cal{G}_0,p_{0})$.

For every $v \in V(\Gamma)$ and $V=q(v) \neq V_0$, we set $G_{V}=G_{v}$. For every $e \in E(\Gamma)$ and $E=q(e) \in E(\bar{\Gamma})$, let $G_{E}=G_{e}$, and $f_{E}=f_{e}$ if $\tau(E) \neq V_{0}$. If $\tau(E)= V_{0}$, we choose a word 
\begin{equation}\label{choice2}
\gamma_{E} \in \Pi(\cal{G}_{0}) \subset \Pi(\cal{G})
\end{equation} 
from 
$p_{0}$ to $\tau(e) \in V(\Gamma_0)$ 
and define the edge homomorphism of $E$ to be 
$f_{E} := ad_{\gamma_{E}} \circ f_{e}$.

\medskip

We define 
a 
group homomorphism 
$\theta$ from $\Pi(\bar{\cal{G}})$ to $\Pi(\cal{G})$ 
via:
\begin{align*}
\theta: \Pi(\bar{\cal{G}})  & \rightarrow \Pi(\cal{G}) \\
	t_{E} &  \mapsto  t_{e} \gamma_{E}^{-1} \text{\ \ \ \ if $\tau(E)=V_0$, $\tau(\bar{E}) \neq V_0$ } \\
	t_{E} &  \mapsto  \gamma_{\bar{E}} t_{e} \gamma_{E}^{-1} \text{\ \ \ \ if $\tau(E)=\tau(\bar{E})=V_0$ } \\
    \theta & \text{\ acts as identity elsewhere } 
\end{align*}

Through the identification 
$G_{V_0}= \pi_{1}(\cal{G}_0, p_0)$
we 
see immediately that the restriction of $\theta$ 
to 
$\pi_1(\bar{\cal{G}}, V_0)$ defines 
an isomorphism 
from $\pi_1(\bar{\cal{G}}, V_0)$ to 
$\pi_1(\cal{G}, p_0)$
which we denote by 
$\theta_0$: 

The 
inverse map 
$\theta_0^{-1}$ 
is given  through introducing a cancelling pair 
$\gamma_E^{-1} \gamma_E$
after 
the stable letter $t_e$ for any edge $e$ 
with $\tau(e) \in V(\Gamma_0)$, and 
$\gamma_{\bar E}^{-1} \gamma_{\bar E}$
before 
the stable letter $t_e$ for any edge $e$ 
with $\tau(\bar e) \in V(\Gamma_0)$.
One then maps
$t_e$ to $\gamma_{\bar{E}}^{-1} t_{E} \gamma_{E}$ if both $\tau(e), \tau(\bar{e}) \in V(\Gamma_0)$, 
and one maps 
$t_e$ to $t_{E}\gamma_{E}$ if only $\tau(e) \in V(\Gamma_0)$.

\begin{rem}
\label{for-A2}
${}^{}$
For later purposes the reader should note here that 
for any edge $E = q(e)$ of $\Gamma(\bar{\cal{G}})$ with $\tau(E)=V_{0}$ there exist a vertex $v(E) := \tau(e)\in V(\cal{G}_{0})$ such that for the
``connecting
word'' 
$\gamma_{E} \in \Pi(\cal{G}_0)$ 
from 
$p_{0}$ 
to $v(E)$ 
one has $\theta \circ f_{E}(G_{E}) \subset \gamma_{E} G_{v(E)} \gamma_{E}^{-1}$.
\end{rem}

\subsection{Quotient graph-of-groups isomorphism}

${}^{}$

In this subsection we define the notion of a
{\it quotient graph-of-groups isomorphism}.

The graphs-of-groups $\cal{G}$, $\cal{G}_0$, $\bar{\cal{G}}$ and 
the group homomorphisms $\theta$ and $\theta_0$
are defined as in the previous subsection.
In particular, let 
$V_0, \gamma_E$ and $p_0$ 
be as given there.

\medskip

Let $H: \cal{G} \rightarrow \cal{G}$ be a graph-of-groups isomorphism which acts as identity on the graph $\Gamma$. 
The map 
$H_{0}: \cal{G}_{0} \rightarrow \cal{G}_{0}$, 
obtained by restricting $H$ 
to
$\cal{G}_0$,
is called the {\it local graph-of-groups isomorphism}.

In order to
define $\bar{H}: \bar{\cal{G}} \rightarrow \bar{\cal{G}}$ 
we set:

\begin{enumerate}
\item 
$\bar{H}_{V_0}=H_{0, *p_{0}}: \pi_{1}(\cal{G}_0, p_0) \rightarrow \pi_{1}(\cal{G}_0, p_0)$; 
\item 
$\delta(E)= H_{*}(\gamma_{E}) \delta(e) \gamma_{E}^{-1} $,
for all $E$ such that $\tau(E)=V_{0}$;
\item $\bar{H}$ 
``equals''
$H$ on the rest of $\bar{\cal{G}}$
(modulo 
replacing 
$x$ 
by 
$X$ 
as 
explained in 
the 
previous subsection).
In particular, 
$\bar H_{\bar{\Gamma}}$
is the identity on the quotient graph 
$\bar{\Gamma}$.
\end{enumerate}

\begin{prop}\label{quotientisomorphism}
The above conditions (1) - (3) give a well defined graph-of-groups isomorphism
$\bar{H}: \bar{\cal G} \to \bar{\cal G}$. 
It 
induces an outer automorphism 
$\hat{\bar H}$ which is 
conjugate to $\hat{H}$ 
via 
the isomorphism 
$\theta_0: \pi_{1}(\bar{\cal{G}}, V_0) \rightarrow \pi_{1}(\cal{G}, p_0)$.
\end{prop}

\begin{proof}
In order 
to show
that 
the above data (1) - (3) give a 
well defined 
graph-of-groups isomorphism, 
the only non-trivial step 
to verify is 
the condition
that $ \bar{H}_{V_{0}} \circ f_{E}=ad_{\delta(E)} \circ f_{E} \circ \bar{H}_{E}$ for all $E=q(e)$ with $\tau(E)=V_{0}$. Denote $v=\tau(e) \in V(\Gamma_0)$.

Observe
first that for 
any 
$h \in G_{v}$, since $H_{0}$ induces an isomorphism on $\Pi(\cal{G}_{0})$, we 
have:
\begin{align*}
\bar{H}_{V_{0}}(\gamma_{E} h \gamma_{E}^{-1})
=H_{0*}(\gamma_{E}) H_{v}(h) H_{0*}(\gamma_{E}^{-1})
=H_{*}(\gamma_{E}) H_{v}(h) H_{*}(\gamma_{E}^{-1})
\end{align*}

For 
any 
$g \in G_{E}$
we 
compute, where the fourth equality uses the previous observation, for $h = f_e(g)$:
\begin{align*}
ad_{\delta(E)} \circ f_{E} \circ \bar{H}_{E}(g)
& = H_{*}(\gamma_E) \delta(e) \gamma_E^{-1} f_{E}(\bar{H}_{E}(g)) \gamma_E \delta(e)^{-1} H_{*}(\gamma_E^{-1})\\
& = H_{*}(\gamma_E) \delta(e) f_{e}(H_{e}(g)) \delta(e)^{-1} H_{*}(\gamma_E^{-1})\\
& = H_{*}(\gamma_E) H_{v}(f_{e}(g)) H_{*}(\gamma_E^{-1})\\
& = \bar{H}_{V_{0}}(\gamma_E f_{e}(g) \gamma_E^{-1}) \\
& = \bar{H}_{V_{0}} \circ f_{E}(g)
\end{align*}

In order to illustrate the previous proof we propose the diagram below: 
every face of the ``cube'' 
pictured 
there 
commutes, up 
to inner automorphisms
on the front and back faces, defined by the correction terms $\delta(E)$ and $\delta(e)$ 
respectively.

\begin{center}
\begin{tikzpicture}
  \matrix (m) [matrix of math nodes, row sep=3em,
    column sep=3em]{
    & G_{e} & & G_{v} \\
    G_{E} & & G_{V_{0}} & \\
    & G_{e} & & G_{v} \\
    G_{E} & & G_{V_{0}} & \\};
  \path[-stealth]
    (m-1-2) edge (m-1-4) edge (m-2-1)
            edge [densely dotted] (m-3-2)
    (m-1-4) edge (m-3-4) edge (m-2-3)
    (m-2-1) edge [-,line width=6pt,draw=white] (m-2-3)
            edge (m-2-3) edge (m-4-1)
    (m-3-2) edge [densely dotted] (m-3-4)
            edge [densely dotted] (m-4-1)
    (m-4-1) edge (m-4-3)
    (m-3-4) edge (m-4-3)
    (m-2-3) edge [-,line width=6pt,draw=white] (m-4-3)
            edge (m-4-3);
\end{tikzpicture}
\end{center}
Here the map $G_e \rightarrow G_E$ is the identity, while the map $G_v \rightarrow G_{V_0}$ is 
$ad_{\gamma_E}$.

\medskip
After having thus proved the first sentence of the proposition, we now turn to the second:
we want to show that the following diagram 
commutes.
\[
\begin{CD}
\pi_1(\bar{\cal{G}},V_0) @>\bar{H}_{*V_0}>> \pi_1(\bar{\cal{G}},V_0) \\
@V\theta_0 VV @VV\theta_0 V \\
\pi_1(\cal{G}, p_0)  @>>H_{*p_0}> \pi_1(\cal{G}, p_0) 
\end{CD}
\]

Notice that the group homomorphism $\theta$ acts on all vertex groups other than $G_{V_0}$ as identity. On 
the 
other hand,
for $E \in E(\bar{\cal{G}})$ with $\tau(E)=V_0$ but $\tau(\bar{E}) \neq V_0$
we 
have:
\begin{align*}
\theta \circ \bar{H}_{*}(t_{E}) = \theta( \delta(\bar{E}) t_{E} \delta(E)^{-1} )
& = \delta(\bar{E}) t_{e}\gamma_{E}^{-1} \delta(E)^{-1} \\
& = \delta(\bar{e}) t_{e} \delta(e)^{-1} H_{*}(\gamma_E)^{-1}\\
& = H_{*}(t_{e} \gamma_E^{-1})\\
& = H_{*} \circ \theta(t_{E})
\end{align*}
A similar computation applies to
$E \in E(\bar{\cal{G}})$ with $\tau(E)=\tau(\bar{E})=V_0$.

Therefore 
we obtain 
$\theta_0 \circ \bar{H}_{*V_{0}}=H_{*p_{0}} \circ \theta_0$,
and hence $\hat{\bar{H}}$ and $\hat{H}$ are conjugate to each other
through the outer isomorphism 
$\hat \theta_0$.
\end{proof}

\begin{rem}
\label{for-A3}
Note in particular, for all $E$ with $\tau(E)=V_{0}$, we have by definition that $\delta(E)$ is 
$H^{-1}_{0}$-conjugate to an element with 
$\cal G_0$-length
equal to zero, 
and hence is $H^{-1}_{0}$-zero:
The word 
$\gamma_{E}$
satisfies 
$$ H_{0*}(\gamma_{E}^{-1}) \theta_{}(\delta(E)) \gamma_{E} \in G_{v(E)} \, .$$
\end{rem}

\begin{rem}\label{uniqueness of quotient automorphism}
Our formal construction of quotient graph-of-group 
isomorphism 
$\bar H$, 
constructed as above,  
depends on the choice of base point 
$p_{0}$
in $\Gamma_0$ and of 
``connecting words"
$\gamma_{E}$
for all edges 
$E$ 
with 
$\tau(E)=V_0 $,
as set up in (\ref{choice2}) in order to define $\theta$ and $\theta_0$. 

However, 
the 
outer automorphism $\hat{\bar{H}}$ 
induced by
$\bar H: \bar{\cal{G}} \rightarrow \bar{\cal{G}}$
depends neither on the choice of 
the 
$\gamma_E$
nor on the choice of 
$p_0$, up to conjugation by the natural isomorphism
$\theta^{-1}_0 \theta'_0$, where $\theta'_0$ is the map analogous to $\theta_0$ defined through an alternative choice of $p_0$ and and the $\gamma_E$.

This is a direct consequence of the statement in Proposition \ref{quotientisomorphism} that $\hat{\bar H}$ is conjugate to $\hat H$
via $\theta_0$.

${}^{}$
Alternatively, a direct proof, without passing through $\hat H$, can be given by applying Lemma \ref{basiclemma}; this yields the slightly stronger result that a second quotient automorphism $\bar H': \bar{\cal G}' \to \bar{\cal G}'$ is conjugated to $\bar H$ by a graph-of-groups isomorphism $F: \bar{\cal G} \to \bar{\cal G}'$.
\end{rem}

\begin{rem}
We may apply this quotient procedure above on several disjoint connected 
subgraphs-of-groups
of $\cal{G}$ and obtain 
the 
analoguous 
conclusion that the quotient graph-of-groups isomorphism 
$\bar{H}$ 
is well defined and 
induces an outer automorphism conjugate to $\hat{H}$.
\end{rem}

\begin{rem} 
\label{pair-notation}
For simplicity of notations, we sometimes represent 
the simultaneous quotienting of the graph-of-groups $\cal G$ and of the isomorphism $H$ by referring to the {\em quotient pair} $(\bar H, \bar{\cal G})$,
obtained from $(H, \cal{G})$ 
{\em modulo the pair $(H_0, \cal{G}_0)$}.
\end{rem}

\section{Blowing up graph-of-groups automorphism}\label{blow up}

In this section, we will reverse the quotient construction in the previous section; we emphasize this 
reversal 
by the choice of our notation.

For simplicity of the presentation, we only give the blow-up construction at a single vertex. However (for example through iterating this procedure), one can generalize the technique described in this section directly to a blow-up construction at several vertices simultaneously
to 
obtain 
Theorem~\ref{intro1} in the 
Introduction.

\subsection {Assumptions}

${}^{}$

Let $\bar{H}: \bar{\cal{G}} \rightarrow \bar{\cal{G}}$
and 
$H_0 : \cal{G}_0 \rightarrow \cal{G}_0$ 
be 
graph-of-groups 
automorphisms which act as the identity on their underlying 
graphs 
$\bar{\Gamma}$ and $\Gamma_0$
respectively.
Let $V_0$ be a vertex of 
$\bar{\Gamma}$. We 
consider 
the following assumptions:

\begin{enumerate} 

\item[(A1)] 
There exist a vertex 
$p_{0} \in V(\cal{G}_{0})$
and a group isomorphism 
$\theta_{0}: G_{V_{0}} \rightarrow \pi_{1}(\cal{G}_{0}, p_{0})$
such that 
$$\theta_{0} \circ H_{V_{0}}=H_{0*,p_{0}} \circ \theta_{0};$$

\item[(A2)] {\it Compatibility requirement for graph-of-groups}: 

For any edge $E$ of $\Gamma(\bar{\cal{G}})$ with $\tau(E)=V_{0}$ there exist some 
vertex $v(E)\in V(\cal{G}_{0})$ and a 
``connecting word'' 
$\gamma_{E} \in \Pi(\cal{G}_0)$ from 
$p_{0}$
to $v(E)$ such that $\theta_{0} \circ f_{E}(G_{E}) \subset \gamma_{E} G_{v(E)} \gamma_{E}^{-1}$;

\item[(A3)] {\it Compatibility requirement for isomorphism}:

The word $\gamma_{E}$ also satisfies 
$ H_{0*}(\gamma_{E}^{-1}) \theta_{0}(\delta(E)) \gamma_{E} \in G_{v(E)}$.

\end{enumerate}

\begin{defn}
\label{defn-6.1}
Assume that condition (A1) is satisfied. Then
the pair $(H_0, \cal{G}_0)$, which is called the {\it local graph-of-groups isomorphism associated to $V_0$}, is said to be {\it compatible} with $(\bar{H}, \bar{\cal{G}})$ if both compatibility requirements (A2) and (A3) are 
satisfied.

\end{defn}

\begin{rem}
\label{rem6.1}
${}^{}$
We'd like to note:
\begin{enumerate}
\item The compatibility requirement for isomorphism implies, for any $\delta(E)$ with $\tau(E)=V_{0}$, that the element $\theta_{0}(\delta(E))$ is $H^{-1}_{0}$-zero. 

Conversely, in order to derive the compatibility requirement for isomorphism for any correction term $\delta(E)$ 
such that $\theta_{0}(\delta(E))$ is $H^{-1}_{0}$-zero,
one needs the additional hypothesis that $\delta(E)$ is {\it $G_{E}$-compatible}: by this we 
mean
that there is a connected word $\gamma_{E} \in \Pi(\cal{G})$ which satisfies both 
assumptions (A2) and (A3)
above.

\item For each $E$ with $\tau(E)=V_{0}$, there may exist more then one pair of $(v(E), \gamma_{E})$ such that the above conditions hold.

\end{enumerate}
\end{rem}

\subsection{Existence of 
the 
blow-up}\label{blow-up}

\begin{thm}\label{blowup}
Let 
$\bar{H}: \bar{\cal{G}} \rightarrow \bar{\cal{G}}$ and $H_0 : \cal{G}_0 \rightarrow \cal{G}_0$
be graph-of-groups isomorphisms
which act as the identity on their underlying graphs, and 
let 
$V_0 \in V(\bar{\cal{G}})$ 
be a vertex for which 
condition (A1) is satisfied. 

Then there exists a ``blow-up'' 
graph-of-groups $\cal G$ 
and a ``blow-up'' 
graph-of-groups isomorphism 
$H: \cal{G} \rightarrow \cal{G}$, 
which contains 
$H_0$ 
as local graph-of-groups isomorphism and 
yields 
modulo $(H_0, \cal{G}_0)$ the quotient pair $(\bar{H}, \bar{\cal{G}})$, 
if and only if the conditions (A2) and (A3) are 
satisfied.

In particular, $H$ and $\bar H$ induce outer automorphisms which are conjugate.
\end{thm}

\begin{proof}
For the convenience of the reader we divide this proof in 6 steps; in the first two we present the data which will serve to define $\cal G$ and $H$ respectively.

\smallskip
\noindent
(1)
The graph
$\Gamma(\cal{G})$,
with vertex set $V(\cal{G}) = V(\bar{\cal{G}}) \setminus \{V_0\} \cup V(\cal{G}_0)$, 
is obtained from $\Gamma(\cal{G})$ and $\Gamma(\cal{G}_0)$ by replacing every edge
$E$
with terminal vertex 
$\tau(E) = V_{0}$
by an edge 
$e$
with 
terminal vertex $v(E)$.
The analogous replacement is done for $\bar{E}$. 
If $E$ has both endpoints distinct from $V_0$ we leave them as they are, but rename $E$ by the corresponding small letter $e$.

We set
$ G_{e}=G_{E}$ and define, 
if $\tau(e) = v(E)$, the edge injection by 
$f_{e}(g)= \gamma_{E}^{-1} \theta_{0}(f_{E}(g)) \gamma_{E}$, for every $g \in G_{e}$. 
For $\tau(e) = \tau(E)$ we define $f_e = f_E$.

\smallskip
\noindent
(2)
The isomorphism 
$H:\cal{G} \rightarrow \cal{G}$ 
is equal 
to $H_{0}$ 
or to $\bar{H}$ 
when restricted 
to 
$\cal{G}_{0}$ 
or to 
$\bar{\cal{G}} \backslash \{V_{0}\}$ respectively, except that 
in the case $\tau(e) = v(E)$ we modify the correction term to 
$\delta(e)=  H_{0*}(\gamma_{E}^{-1}) \theta_{0}(\delta(E)) \gamma_{E}$.

\smallskip
\noindent
(3)
In order to show that the data defined above in (1) 
give 
a well defined graph-of-groups one only needs to verify that for any edge $e$ of $\cal G$ the edge injection $f_e$ has its image in the vertex group $G_{\tau(e)}$. If $\tau(e)$ is not contained in $V(\cal G_0)$, then this is immediate from the definition. If $\tau(e) \in V(\cal G_0)$, then we use 
the compatibility 
requirement
for 
graph-of-groups, which gives 
(A2), 
$f_{e}(G_{e})= \gamma_{E}^{-1} \theta_{0}(f_{E}(G_{E})) \gamma_{E} \subset G_{v(E)}$. 

\smallskip
\noindent
(4)
We now want to show that the data defined above in (2) 
give 
a 
well-defined
graph-of-groups isomorphism. This is equivalent to showing for every edge $e$ of $\cal G$ the equality stated in condition (4) of Definition \ref{graphofgroupsiso}. Again, for $\tau(e) \notin V(\cal G_0)$ this is a direct consequence of our set-up. For $\tau(e) \in V(\cal G_0)$ we compute (where the third equality uses the definition of $f_e$ from (2) above, and the fifth equality uses condition (A1)):
\begin{align*}
ad_{\delta(e)} \circ f_{e} \circ H_{e}(g)
& = \delta(e) f_{e}(H_{e}(g)) \delta(e)^{-1} \\
& = H_{0*}(\gamma_{E}^{-1})\theta_{0}(\delta(E)) \gamma_{E} f_{e}(H_{e}(g)) \gamma_{E}^{-1}\theta_{0}(\delta(E)^{-1}) H_{0*}(\gamma_{E}) \\
& = H_{0*}(\gamma_{E}^{-1})\theta_{0}(\delta(E) f_{E}(H_{E}(g))\delta(E)^{-1}) H_{0*}(\gamma_{E})\\
& = H_{0*}(\gamma_{E}^{-1})\theta_{0}(H_{V_{0}}(f_{E}(g))) H_{0*}(\gamma_{E})\\
\ssh
& = H_{0*}(\gamma_{E}^{-1})H_{0*, P_0}(\theta_{0}(f_{E}(g))) H_{0*}(\gamma_{E})\\
& = H_{0*}(\gamma_{E}^{-1} \theta_{0}(f_{E}(g)) \gamma_{E}) = H_{0*}(f_{e}(g))
= H_{v(E)} \circ f_{e}(g)
\end{align*}

\smallskip
\noindent
(5)
We now observe from our construction above that
the blow-up 
graph-of-groups isomorphism $H$ contains $H_0$
as local graph-of-groups isomorphism. We furthermore have already a base point 
$p_0$ 
as well as connecting words 
$\gamma_E$ from $p_0$ 
to $\tau(e)$ for each edge $e$ with terminal vertex $\tau(e) \in V(\cal G_0)$ specified, so that one can readily apply Proposition \ref{quotientisomorphism} to obtain a quotient graph-of-groups isomorphism, which, since 
$p_0$ and all $\gamma_E$
are as chosen before, must agree with the isomorphism $\bar H: \bar{\cal G} \to \bar{\cal G}$.

In particular it follows that the induced outer automorphisms $\hat H$ and $\hat{\bar H}$ are conjugate.

\smallskip
\noindent
(6)
Finally, it follows from Remarks \ref{for-A2} and \ref{for-A3} that any blow-up pair $(H, \cal G)$ which quotients modulo $(H_0, \cal G_0)$ to the given pair $(\bar H, \bar{\cal G})$ must necessarily satisfy the 
conditions (A2) and (A3).
\end{proof}

\section{Partial Dehn Twist case}\label{partialDehntwist}

In this section we will apply 
Theorem 
\ref{blowup} to the special case where all edges $E$ with terminal vertex $V_0$ have trivial edge group:
$$G_E = \{1\} \qquad \text{if} \qquad  \tau(E) = V_0$$
In this case the compatibility conditions from Definition \ref{defn-6.1} simplify considerably, as condition (A2) is trivially satisfied. Regarding the other compatibility condition, we observe that a connecting word $\gamma_E$ which satisfies condition (A3) exists, if and only if the $\theta_0$-image of the correction term $\delta(E)$ is $H_0^{-1}$-zero, in the terminology of section \ref{Hconj}. Hence we define:

\begin{defn}
\label{locally-zero}
Let $\bar{H}: \bar{\cal{G}} \rightarrow \bar{\cal{G}}$ and $H_0 : \cal{G}_0 \rightarrow \cal{G}_0$ be graph-of-groups isomorphisms which act as the identity on their underlying graphs, and let $V_0 \in V(\bar{\cal{G}})$ be a vertex 
with an isomorphism $\theta_0: G_{V_0} \to \pi_1(\cal G_0, p_0)$ as in condition (A1).

Then an edge $E$ of $\bar{\cal G}$ is said to be {\em locally zero} if the correction term of $E$ has image $\theta_0(\delta(E))$ which is $H_0^{-1}$-zero.
In other words, there exists a connected word $\gamma_E \in \Pi(\cal G_0)$ such that 
$ H_{0*}(\gamma_{E}^{-1}) \theta_{0}(\delta(E)) \gamma_{E} \in G_{v}$ 
for some vertex $v$ of $\cal G_0$.
\end{defn}

We will now specialize further to the case of partial Dehn twists $\bar{H}: \bar{\cal{G}} \rightarrow \bar{\cal{G}}$ as defined in Definition \ref{partial Dehn twist}, which have trivial vertex group isomorphisms $\bar H_V$ except at vertices 
$V_i$ that belong to a subset $\cal V_0 \subset V(\bar{\cal G})$. At those ``special'' vertices we want to assume furthermore that 
$\bar H_{V_i}$ is a Dehn twist automorphism, given 
via an isomorphism $\theta_i: G_{V_i} \to \pi_1(\cal G_i, p_i)$ as in condition (A1)
by some {\em local} Dehn twist $D_i: \cal G_i \to \cal G_i$ 
(see 
Definition \ref{dehn}), so that $\bar H$ is a 
{\em partial Dehn twist relative to a family of local Dehn twists}.

\begin{cor}\label{Dehn01}
Let $\bar{H}: \bar{\cal{G}} \rightarrow \bar{\cal{G}}$ be a partial Dehn twist relative to $\cal{V}_0 \subset V(\bar{\cal{G}})$,
and assume
that for each 
$V_i \in \cal{V}_0$ the 
vertex 
group isomorphism $H_{V_i}$ is a Dehn twist automorphism 
represented by a 
``local'' 
Dehn twist 
$D_i: \cal{G}_i \to \cal G_i$.

Then one can blow up $(\bar{H}, \bar{\cal{G}})$ via $(D_i, \cal{G}_i)$ to 
obtain a 
Dehn twist $D: \cal{G} \rightarrow \cal{G}$ if and only if
every edge $E$ of $\bar{\cal G}$ with $\tau(E) \in \cal{V}_0$ is locally zero.

In this case the induced outer automorphism
$\hat{\bar{H}}: \pi_1(\bar{\cal{G}}) \rightarrow \pi_1(\bar{\cal{G}})$ is a Dehn twist 
automorphism.
\end{cor}

\begin{proof}
Let $V_i$ be a vertex contained in $\cal{V}_0$.
Since 
any edge $E$
terminating at 
$V_i$
has trivial edge group, the compatibility requirement 
for
graph-of-groups
(A2) 
holds automatically. 
On the other hand, the 
assumption that any edge $E$ with $\tau(E) \in \cal{V}_0$ is locally zero 
is equivalent to
the compatibility requirement 
for
isomorphism
(A3). 
We can hence 
apply 
Theorem 
\ref{blowup} to directly obtain the desired ``if and only if'' statement. 

Since $\bar H$ is a partial Dehn twist and the $D_i$ are Dehn twists, it follows directly 
from the construction of the blow-up isomorphism in the proof of Theorem \ref{blowup} that $D$ satisfies 
the first three properties of
Definition \ref{dehn}.

In order to see that the fourth condition of Definition \ref{dehn} is also satisfied, we consider three cases: If an edge $E$ from $\bar{\cal G}$ has endpoint outside of $\cal V_0$, then all relevant data for $E$ and the corresponding edge $e$ in $\cal G$ coincide, so that we can simply use the last condition of Definition \ref{partial Dehn twist}. If $\tau(E) \in \cal V_0$, then $G_E = \{1\}$ follows from Definition \ref{partial Dehn twist}, so that the fourth condition of Definition \ref{dehn} is automatically satisfied. Finally, if the edge $e$ from $\cal G$ in question is an edge of some of the local graph-of-groups $\cal G_i$, then we use that $D_i: \cal G_i \to \cal G_i$ itself is assumed to satisfy Definition \ref{dehn}, so that in particular its fourth condition of this definition holds for $e$.

As a consequence we obtain
from 
Proposition 
\ref{equivalent Dehn twist}
that $\hat D$ is a Dehn twist automorphism. But Theorem \ref{blowup} also states that the outer automorphisms induced by $\bar H$ and by $D$ are conjugate, which shows that $\hat{\bar H}$ is also a Dehn twist automorphism.
\end{proof}

Corollary~\ref{Dehn01} 
gives the possibility to decide  
the existence of a blow-up Dehn twist relative to particular 
given 
local Dehn twist representatives. A harder but more interesting question is 
whether 
one can 
blow up the given partial Dehn twist $\bar H: \bar{\cal G} \to \bar{\cal G}$ to a
global Dehn twist with respect to 
{\em some} family of
local Dehn twists
that induce the vertex automorphisms of $\bar H$. In other words
(using the terminology of Definition \ref{partial-D-rel-D}): 

${}^{}$
``When 
does 
a partial Dehn twist with Dehn twist automorphism on the vertices induce a Dehn twist 
automorphism~?''

The subtlety of this question is illustrated by the 
Examples 
\ref{no-blow-up}
and
\ref{blow-up-possible}
in the Introduction.
A complete answer is given in \cite{KY02}.

\begin{rem}
\label{algorithmics}
Assume that in the situation of Corollary \ref{Dehn01} the following data are given:
\begin{enumerate}
\item[(a)] The graph-of-groups $\bar{\cal G}$ and $\cal G_i$ have free groups of finite rank 
as vertex and edge groups, with chosen bases for each of them, and 
that the edge maps are given in the usual fashion by 
specifying the images of the edge groups basis elements as words in the basis of the adjacent vertex group.
\item[(b)] The graph-of-groups automorphisms $\bar H$ and $H_i$ are similarly given by specifying the images of the 
given 
basis elements as word in those bases, and by specifying for every edge $E$ of $\bar{\cal G}$ 
with terminal vertex $V_i \in \cal V_0$
the $\theta_i$-image of the correction term $\delta(E)$ as word $W(\delta(E)) \in \Pi(\cal G_i)$.
\item[(c)] For every edge $E$ with $\tau(E) = V_i \in \cal V_0$, a vertex $v_E$ of $\cal G_i$ and connecting words $\gamma_E \in \Pi(\cal G_i)$ are specified which satisfy 
$$ H_{i*}(\gamma_{E}^{-1}) W(\delta(E)) \gamma_{E} \in G_{v_E}.$$
\end{enumerate}
Then from these data one derives directly (in an algorithmic way) the analogous data needed to define the blow-up graph $\cal G$ and the blow-up 
isomorphism 
$H$. 
Indeed, the precise instructions for this procedure are given in the parts (1) and (2) of proof of Theorem \ref{blowup}.
\end{rem}

\end{document}